\documentclass[smallheadings,headsepline,11pt,a4paper]{article}

\usepackage[applemac]{inputenc} 
\usepackage{hyperref} 
\usepackage[english]{babel}
\usepackage{dsfont}
\usepackage{amsmath}
\usepackage{amssymb}
\usepackage{amsthm}
\pagestyle{headings} 
\clubpenalty = 10000 
\widowpenalty = 10000 
\title{Class Forcing in Class Theory}
\author{Carolin Antos\footnote{I want to thank the Austrian Academy of Sciences for their generous support through their Doctoral Fellowship Program. I would also like to thank my doctoral advisor Sy Friedman for his insightful comments on previous drafts of this paper and his kind support. This paper was prepared as part of the project: ``The Hyperuniverse: Laboratory of the Infinite'' of the JTF grant ID35216.}\\
Kurt Gödel Research Center for Mathematical Logic\\
University of Vienna}

\begin{document}

\newtheorem{defi}{Definition} 
\newtheorem{theo}[defi]{Theorem}
\newtheorem{cor}[defi]{Corollary}
\newtheorem{prop}[defi]{Proposition}
\newtheorem{lem}[defi]{Lemma}
\newtheorem{rem}[defi]{Remark}
\newtheorem{claim}[defi]{Claim}
\newtheorem{fac}{Fact}

\maketitle

\begin{abstract}
In this article we adapt the existing account of class-forcing over a ZFC model to a model $(M,\mathcal{C})$ of Morse-Kelley class theory. We give a rigorous definition of class-forcing in such a model and show that the Definability Lemma (and the Truth Lemma) can be proven without restricting the notion of forcing. Furthermore we show under which conditions the axioms are preserved. We conclude by proving that Laver's Theorem does not hold for class-forcings.
\end{abstract}

\section{Introduction}

The idea of considering a forcing notion with a (proper) class of conditions instead of with a set of conditions was introduced by W. Easton in 1970. He needed the forcing notion to be a class to prove the theorem that the continuum function $2^{\kappa}$, for $\kappa$ regular, can behave in any reasonable way and as changes in the size of $2^{\kappa}$ are bounded by the size of a set-forcing notion, the forcing has to be a class. Two problems arise when considering a class sized forcing: the forcing relation might not be definable in the ground model and the extension might not preserve the axioms. This was addressed in a general way in S.Friedman's book (see \cite{sybook}) where he presented class forcings which are definable (with parameters) over a model $\langle M, A\rangle$. This is called a model of ZF if $M$ is a model of ZF and Replacement holds in $M$ for formulas which mention $A$ as a predicate. We will call such forcings $A$-\emph{definable class-forcings}, their generics $G$ $A$-\emph{definable class-generics} and the resulting new model $A$-\emph{definable class-generic outer model}. Friedman showed that for such $A$-definable class-forcing which satisfy an additional condition called tameness the Definability Lemma, the Truth Lemma and the preservation of the Axioms of ZFC hold. 
In this article we would like to broaden this approach by changing the notion of ground model from a model $M$ of ZFC with a class $A$ to general models of class theory with an arbitrary collection of classes $\mathcal{C}$. We choose Morse-Kelley class theory as our underlying theory and then follow the line of \cite{sybook} and restrict the forcing notion by using the property of tameness. 
\footnote{In \cite{chua} R. Chuaqui follows a similar approach and defines forcing for Morse-Kelley class theory. However there is a significant difference between our two approaches. To show that the extension preserves the axioms Chuaqui restricts the generic $G$ for an arbitrary forcing notion $P$ in the following way: A subclass $G$ of a notion of forcing $P$ is \emph{strongly P-generic over} a model $(M,\mathcal{C})$ of MK iff $G$ is $P$-generic over $(M, \mathcal{C})$ and for all ordinals $\beta\in M$ there is a set $P'\in M$ such that $P'\subseteq P$ and for all sequences of dense sections $\langle D_{\alpha}:\alpha\in\beta\rangle$, there is a $q\in G$ satisfying
\begin{align*}
\forall\alpha(\alpha\in\beta\to\exists\, p\,(p\in P'\cap G\,\wedge\,&\text{the greatest lower bound of $p$ and $q$ exists}\\
&\text{and is an element of } D_{\alpha})).
\end{align*}
where a subclass $D$ of a partial order $P$ is a $P$-\emph{section} if every extension of a condition in $D$ is in $D$.
}

In the following we will introduce Morse-Kelley class theory and define the relevant notions like names, interpretations and the extension for class-forcing in Morse-Kelley. Then we will show that the forcing relation is definable in the ground model, that the Truth Lemma holds and we characterize $P$-generic extensions which satisfy the axioms of MK. We will show that Laver's Theorem fails for class-forcings.


\section{Morse-Kelley Class Theory}\label{axioms}

In ZFC we can only talk about classes by using formulas as our only objects are sets. In class theories like Morse-Kelley (MK) or Gödel-Bernays (GB) the language is two-sorted, i.e. the object are sets and classes and we have corresponding quantifiers for each type of object. \footnote{There is also a one-sorted formulation in which the only objects are classes and sets are defined as being classes which are elements of other classes. For reasons of clarity we will use the two-sorted version throughout the paper.} We denote the classes by upper case letters and sets by lower case letters, the same will hold for class-names and set-names and so on. Hence atomic formulas for the $\in$-relation are of the form ``$x\in X$'' where $x$ is a set-variable and $X$ is a set- or class-variable. The models $\mathcal{M}$ of MK  are of the form $\langle M, \in, \mathcal{C}\rangle$, where $M$ is a transitive model of ZFC, $\mathcal{C}$ the family of classes of $\mathcal{M}$ (i.e. every element of $\mathcal{C}$ is a subset of $M$) and $\in$ is the standard $\in$ relation (from now on we will omit mentioning this relation). 

The axiomatizations of class theories which are often used and closely related to ZFC are MK and GBC. Their axioms which are purely about sets coincide with the corresponding ZFC axioms such as pairing and union and they share class axioms like the Global Choice Axiom. Their difference lies in the Comprehension Axiom in the sense that GB only allows quantification over sets whereas MK allows quantification over sets as well as classes. This results in major differences between the two theories which can be seen for example in their relation to ZFC: GB is a conservative extension of ZFC, meaning that every sentence about sets that can be proved in GB could already be proved in ZFC and so GB cannot prove ``new'' theorems about ZFC. MK on the other hand can do just that, in particular MK implies CON(ZFC) \footnote{This is because in MK we can form a Satisfaction Predicate for $V$ and then by reflection we get an elementary submodel $V_{\alpha}$ of $V$. But any such $V_{\alpha}$ models ZFC.} and so MK is not conservative over ZFC. The consistency strength of MK is strictly stronger than that of ZFC but lies below that of ZFC + an inaccessible cardinal as $\langle V_{\kappa}, V_{\kappa +1}\rangle$ for $\kappa$ inaccessible, is a model for MK in ZFC. 

As said above we choose MK (and not GB) as underlying theory to define class-forcing. The reason lies mainly in the fact that within MK we can show the Definability Lemma for class-forcing without having to restrict the forcing notion whereas in GB this would not be possible. We use the following axiomatization of MK:
\begin{itemize}
\item[A)] Set Axioms:
\begin{enumerate}
\item Extensionality for sets: $\forall x \forall y (\forall z\,(z\in x\leftrightarrow z\in y)\to x=y)$.
\item Pairing: For any sets $x$ and $y$ there is a set $\{x,y\}$.
\item Infinity: There is an infinite set.
\item Union: For every set $x$ the set $\bigcup x$ exists.
\item Power set: For every set $x$ the power set $P(x)$ of $x$ exists.
\end{enumerate}
\item[B)] Class Axioms:
\begin{enumerate}
\item Foundation: Every nonempty class has an $\in$-minimal element.  
\item Extensionality for classes: $\forall z\,(z\in X\leftrightarrow z\in Y)\to X=Y$.
\item Replacement: If a class $F$ is a function and $x$ is a set, then $\{ F(z): z\in x\}$ is a set.
\item Class-Comprehension: \begin{equation*}
\forall X_1\ldots\forall X_n\exists Y\; Y=\{x: \varphi(x,X_1,\ldots, X_n)\}
\end{equation*}
where $\varphi$ is a formula containing class parameters in which quantification over both sets and classes are allowed.
\item Global Choice: There exists a global class well-ordering of the universe of sets.
\end{enumerate}
\end{itemize}

There are different ways of axiomatizing MK, one of them is obtained by using the Limitation of Size Axiom instead of Global Choice and Replacement. Limitation of Size is an axiom that was introduced by von Neumann says that for every $C\in \mathcal{M}$, $C$ is a proper class if and only if there is a one-to-one function from the universe of sets to $C$, i.e. all the proper classes have the same size. The two axiomatizations are equivalent: Global Choice and Replacement follow from Limitation of size and vice versa. \footnote{This is because Global Choice is equivalent with the statement that every proper class is bijective with the ordinals.} A nontrivial argument shows that Limitation of Size does not follow from Replacement plus Local Choice.


In the definition of forcing we will use the following induction and recursion principles:

\begin{prop}[Induction]\label{induction}
Let $(Ord, R)$ be a well-founded relation and $\varphi(\alpha)$ a property of an ordinal $\alpha$. Then it holds that
\begin{equation*}
\forall \alpha\in Ord\, ((\forall \beta\in Ord\,(\beta\, R\,\alpha\to\varphi(\beta)))\to\varphi(\alpha))\to\forall\alpha\in Ord\,\varphi(\alpha)
\end{equation*}
\end{prop}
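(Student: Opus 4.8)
The plan is to prove the principle by contraposition, reducing it to the defining property of a well-founded relation, namely that every nonempty subclass of $Ord$ possesses an $R$-minimal element. I would assume the antecedent, that for every $\alpha\in Ord$ the implication $(\forall\beta\in Ord\,(\beta\,R\,\alpha\to\varphi(\beta)))\to\varphi(\alpha)$ holds, and suppose toward a contradiction that the conclusion fails, so that $\varphi(\alpha)$ is false for at least one ordinal.

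First I would collect the counterexamples into a single class. Here the essential use of Morse-Kelley is made: since $\varphi$ is an arbitrary formula that may quantify over both sets and classes and may carry class parameters, the full Class-Comprehension Axiom is needed to guarantee that
\[
C=\{\alpha\in Ord : \neg\varphi(\alpha)\}
\]
is a genuine class of $\mathcal{M}$. This is precisely the step that would be unavailable in GB, whose comprehension schema forbids class quantifiers, and it is the reason the induction principle is formulated over MK rather than over a weaker theory.

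Next, since $\varphi$ fails somewhere, $C$ is nonempty, so well-foundedness of $(Ord,R)$ yields an $R$-minimal element $\alpha_0\in C$. By minimality no $R$-predecessor of $\alpha_0$ lies in $C$, whence $\varphi(\beta)$ holds for every $\beta\,R\,\alpha_0$. The antecedent, instantiated at $\alpha_0$, then forces $\varphi(\alpha_0)$, contradicting $\alpha_0\in C$. Therefore $C$ is empty and $\varphi$ holds of every ordinal, as required.

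The one delicate point, which I would flag as the crux, is the passage from \emph{well-founded} to the existence of an $R$-minimal element of the class $C$. If well-foundedness is read as the minimal-element property for subclasses, the argument is immediate; if it is phrased as the absence of infinite descending $R$-sequences, one must invoke Global Choice to extract such a descending chain from a class lacking a minimal element and derive a contradiction. Either way the remainder is routine once $C$ has been established as a legitimate class, so it is the comprehension step that carries the real content.
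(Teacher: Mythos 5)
Your proof is correct and takes essentially the same route as the paper's: assume the conclusion fails, use well-foundedness of $(Ord,R)$ to extract an $R$-minimal ordinal at which $\varphi$ fails, and contradict the antecedent instantiated at that ordinal. Your explicit appeal to Class-Comprehension to form the class $C$ of counterexamples (and your remark on how well-foundedness is read) merely spells out steps the paper's one-line proof leaves implicit.
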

\begin{proof}
Otherwise, as $R$ is well-founded, there exists an $R$-minimal element $\alpha$ of Ord such that $\neg\varphi(\alpha)$. That is a contradiction.
\end{proof}

\begin{prop}[Recursion]\label{recursion}
For every well-founded binary relation $(Ord, R)$ and every formula $\varphi(X, Y)$ satisfying $\forall X\,\exists !\, Y\,\varphi(X,Y)$, there is a unique binary relation $S$ on $Ord\times V$ such that for every $\alpha\in Ord$ it holds that $\varphi(S_{<\alpha}, S_{\alpha})$ holds, where $S_{\alpha}=\{ x\,\vert\,(\alpha, x)\in S\}$ and $S_{<\alpha}=\{(\beta, x)\in S\,\vert\,\beta R\alpha\}$.
\end{prop}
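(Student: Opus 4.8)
The plan is to prove existence and uniqueness separately, with both arguments resting on the well-founded Induction principle of Proposition~\ref{induction}. \emph{Uniqueness} is the easier half: suppose $S$ and $S'$ both satisfy the conclusion, and show $S_\alpha = S'_\alpha$ for every $\alpha\in Ord$ by induction on $R$. Fixing $\alpha$ and assuming $S_\beta = S'_\beta$ for all $\beta\,R\,\alpha$, one reads off $S_{<\alpha}=S'_{<\alpha}$ directly from the definition of the sections; since $\varphi(S_{<\alpha},S_\alpha)$ and $\varphi(S'_{<\alpha},S'_\alpha)$ both hold and $\varphi$ is functional by the hypothesis $\forall X\,\exists!\,Y\,\varphi(X,Y)$, the two values coincide. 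Extensionality for classes then upgrades this pointwise agreement to $S=S'$.

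For \emph{existence} I would obtain $S$ by gluing together partial solutions. Call $s\subseteq Ord\times V$ an \emph{approximation} if its domain is closed under $R$-predecessors and $\varphi(s_{<\alpha},s_\alpha)$ holds for every $\alpha$ in its domain, with $s_\alpha$ and $s_{<\alpha}$ defined as in the statement. First I would prove a coherence lemma: any two approximations agree on the intersection of their domains, again by induction on $R$ exactly as in the uniqueness argument. Next I would show, by induction on $R$, that every $\alpha$ lies in the domain of some approximation: the inductive hypothesis supplies approximations covering each $R$-predecessor of $\alpha$, coherence lets me glue them into a single approximation $t$ defined below $\alpha$, and the functionality of $\varphi$ furnishes the unique value to assign at $\alpha$. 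Finally I would set
\begin{equation*}
S = \{(\alpha, x) : \exists s\,(s \text{ is an approximation} \wedge (\alpha, x) \in s)\},
\end{equation*}
a class by Class-Comprehension; coherence makes $S$ single-valued at each $\alpha$, and $\varphi(S_{<\alpha},S_\alpha)$ holds because $S$ agrees with an approximation on $\alpha$ and its predecessors.

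The hard part will be the gluing step, and it is precisely there that the class-theoretic axioms carry the load. To assemble the approximations covering the $R$-predecessors of $\alpha$ into one object $t$ and to present the section $t_{<\alpha}$ as a legitimate argument for $\varphi$, I must know that those predecessors together with their assigned values form a set, which is where Replacement is invoked; this goes through smoothly when $R$ is \emph{set-like}, i.e.\ when $\{\beta:\beta\,R\,\alpha\}$ is a set for each $\alpha$. Without that assumption the section $S_{<\alpha}$ may be a proper class and the approximations can no longer be taken to be sets, so one would instead quantify over \emph{class} approximations and lean on the impredicativity of MK's Class-Comprehension, which permits quantification over classes in the defining formula. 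Either way, the genuine delicacy is bookkeeping the set-versus-class status of each object so that every appeal to Replacement and to Class-Comprehension is licensed.
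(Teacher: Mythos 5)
Your proposal is correct and takes essentially the same route as the paper: the paper's own proof likewise builds, by induction on the well-founded relation, a unique partial solution $S^{\gamma}$ defined on the $R$-predecessors of each $\gamma$, and then glues these together into $S=\bigcup_{\gamma}S^{\gamma}$ by Class-Comprehension. One remark: your closing worry about Replacement and set-likeness is moot here, since $\varphi$'s second argument (hence each section $S_{\alpha}$) may itself be a proper class, so the approximations must be classes from the outset and the operative branch of your hedge --- quantifying over class approximations via MK's impredicative Class-Comprehension --- is exactly what the paper's proof relies on.
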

\begin{proof}
By induction on $\alpha$ it holds that for each $\gamma$ there exists a unique binary relation $S^{\gamma}$ on $Ord_{<\gamma}\times V$, where $Ord_{<\gamma}=\{\beta\in Ord\,\vert\,\beta R\gamma\}$, such that $\varphi(S^{\gamma}_{<\alpha}, S^{\gamma}_\alpha)$ holds for all $\alpha R\gamma$. Then it follows from Class-Comprehension that we can take $S=\bigcup_{\gamma\in Ord} S{\gamma}$.
\end{proof}

\section{Generics, Names and the Extension}
To lay out forcing in MK we have to redefine the basic notions like names, interpretation of names etc. to arrive at the definition of the forcing extension. As we work in a two-sorted theory we will define these notions for sets and classes respectively. Let us start with the definition of the forcing notions and its generics. We use the notation $(X_1,\ldots, X_n)\in \mathcal{C}$ to mean $X_i\in \mathcal{C}$ for all $i$.\\
\begin{defi}
Let $P\in\mathcal{C}$ and $\leq_P\, \in \mathcal{C}$ be a partial ordering with greatest element $1^P$. We call $(P,\leq_P)\in\mathcal{C}$ an $(M,\mathcal{C})$-forcing and often abbreviate it by writing $P$. With the above convention $(P,\leq_P)\in\mathcal{C}$ means that $P$ and $\leq_P$ are in $\mathcal{C}$. \\
$G\subseteq P$ is $P$-generic over $(M,\mathcal{C})$ if 
\begin{enumerate}
\item $G$ is compatible: If $p, q\in G$ then for some $r$, $r\leq p$ and $r\leq q$.
\item $G$ is upwards closed: $p\geq q\in G\,\to\, p\in G$.
\item $G\cap D\neq \emptyset$ whenever $D\subseteq P$ is dense, $D\in\mathcal{C}$.
\end{enumerate}
\end{defi}
Note that from now on we will assume $M$ to be countable (and transitive) and $\mathcal{C}$ to be countable to ensure that for each $p\in P$ there exists $G$ such that $p\in G$ and $G$ is $P$-generic.

We build the hierarchy of names for sets and classes in the following way (we will use capital greek letters for class-names and lower case greek letters for set-names): 
\begin{defi}\label{names}
\begin{itemize}
\item[]
\item[] $\mathcal{N}^s_0=\emptyset$.
\item[] $\mathcal{N}^s_{\alpha+1}=\{\sigma : \sigma\text{ is a subset of }\mathcal{N}^s_{\alpha}\times P \text{ in } M\}$.
\item[] $\mathcal{N}^s_{\lambda}=\bigcup\{\mathcal{N}^s_{\alpha}:\alpha<\lambda\}$, if $\lambda$ is a limit ordinal.
\item[] $\mathcal{N}^s=\bigcup\{\mathcal{N}^s_{\alpha}:\alpha\in ORD(M)\}$ is the class of all set-names of P. 
\item[] $\mathcal{N}=\{\Sigma: \Sigma\text{ is a subclass of }\mathcal{N}^s\times P\text{ in } \mathcal{C}\}$.
\end{itemize}
\end{defi}

Note that the $\mathcal{N}^s_{\alpha}$ (for $\alpha>0$) are in fact proper classes and therefore Definition \ref{names} is an inductive definition of a sequence of proper classes of length the ordinals. The fact that with this definition we stay inside $\mathcal{C}$ follows from Proposition \ref{recursion}.

\begin{lem}
\begin{itemize}
\item[]
\item[a)] If $\alpha\leq\beta$ then $\mathcal{N}^s_{\alpha}\subseteq\mathcal{N}^s_{\beta}$.
\item[b)] $\mathcal{N}^s\subseteq\mathcal{N}$.
\end{itemize}
\end{lem}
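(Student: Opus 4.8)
The plan is to unpack the inductive structure of Definition~\ref{names} and prove each part by an application of the Induction principle (Proposition~\ref{induction}), reading the membership conditions for $\mathcal{N}^s_{\alpha+1}$ and $\mathcal{N}$ directly off the definitions.

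For part a) I would fix $\alpha$ and prove $\mathcal{N}^s_{\alpha}\subseteq\mathcal{N}^s_{\beta}$ for all $\beta\geq\alpha$ by induction on $\beta$. The base case $\beta=\alpha$ is trivial, and for a limit $\beta>\alpha$ the inclusion is immediate, since $\mathcal{N}^s_{\beta}$ is the union $\bigcup_{\gamma<\beta}\mathcal{N}^s_{\gamma}$, one of whose terms is $\mathcal{N}^s_{\alpha}$. The crux is the successor step: since the inductive hypothesis already gives $\mathcal{N}^s_{\alpha}\subseteq\mathcal{N}^s_{\gamma}$, it suffices to establish the single inclusion $\mathcal{N}^s_{\gamma}\subseteq\mathcal{N}^s_{\gamma+1}$ and compose. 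To see this, take $\sigma\in\mathcal{N}^s_{\gamma}$; the key observation is that any such $\sigma$ is a subset, in $M$, of $\mathcal{N}^s_{\delta}\times P$ for some $\delta<\gamma$ (reading off $\delta=\gamma-1$ when $\gamma$ is a successor, and tracing $\sigma$ back to the successor stage below $\gamma$ at which it was introduced when $\gamma$ is a limit). Applying monotonicity at the lower level, $\mathcal{N}^s_{\delta}\subseteq\mathcal{N}^s_{\gamma}$, yields $\sigma\subseteq\mathcal{N}^s_{\gamma}\times P$ with $\sigma\in M$, which is exactly the membership condition for $\mathcal{N}^s_{\gamma+1}$.

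For part b) I would use that every set-name already appears at a successor stage: if $\sigma\in\mathcal{N}^s$, then, since $\mathcal{N}^s_0=\emptyset$ and limit stages are unions, $\sigma\in\mathcal{N}^s_{\alpha+1}$ for some $\alpha$, so $\sigma$ is a subset of $\mathcal{N}^s_{\alpha}\times P$ in $M$. Because $\mathcal{N}^s_{\alpha}\subseteq\mathcal{N}^s$ by definition of the union, this gives $\sigma\subseteq\mathcal{N}^s\times P$, so $\sigma$ is a subclass of $\mathcal{N}^s\times P$. It then remains to check that $\sigma$ belongs to $\mathcal{C}$, and not merely to $M$; here I would invoke the fact that in a model $\langle M,\in,\mathcal{C}\rangle$ of MK every set is a class, i.e. $M\subseteq\mathcal{C}$ (by transitivity $\sigma\subseteq M$, and Class-Comprehension delivers $\{x:x\in\sigma\}=\sigma$ as a member of $\mathcal{C}$). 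With $\sigma\in\mathcal{C}$ and $\sigma$ a subclass of $\mathcal{N}^s\times P$, we conclude $\sigma\in\mathcal{N}$.

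The main obstacle is the bookkeeping in the successor step of part a), specifically the limit-$\gamma$ subcase, where a name must be traced back to the earlier successor stage at which it first occurs; everything else is a direct unpacking of the definitions. The only conceptual point in part b) is recognizing that set-names lie in $\mathcal{C}$, which is precisely where the identification of sets as classes in MK is used.
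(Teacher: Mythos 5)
Your overall route coincides with the paper's: part a) by induction on the stage, with the key step that a name in $\mathcal{N}^s_{\gamma}$ traces back to a successor stage and is therefore a subset of some $\mathcal{N}^s_{\delta}\times P$ with $\delta<\gamma$; part b) by unpacking the definition of $\mathcal{N}$ and locating each element of a set-name at an earlier stage. Your part b) is in fact more careful than the paper's, which silently treats the set $\sigma$ as a class; you make the identification explicit by observing that $\{x:x\in\sigma\}\in\mathcal{C}$ via Class-Comprehension, which is exactly the point needed for $\sigma\in\mathcal{N}$ and not just $\sigma\subseteq\mathcal{N}^s\times P$.

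There is, however, one organizational flaw in part a) that makes a step circular as written. You fix $\alpha$ and induct on $\beta\geq\alpha$, so at the successor step $\beta=\gamma+1$ your inductive hypothesis gives only $\mathcal{N}^s_{\alpha}\subseteq\mathcal{N}^s_{\gamma'}$ for $\alpha\leq\gamma'\leq\gamma$. But to prove $\mathcal{N}^s_{\gamma}\subseteq\mathcal{N}^s_{\gamma+1}$ you invoke ``monotonicity at the lower level'', i.e. $\mathcal{N}^s_{\delta}\subseteq\mathcal{N}^s_{\gamma}$ for the $\delta<\gamma$ produced by the trace-back; this is an instance of the very statement being proved, at a pair of indices (with $\delta$ possibly below $\alpha$) that your hypothesis does not cover. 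The repair is standard and is exactly how the paper sets things up: induct on $\beta$ on the statement universally quantified over pairs, namely ``for all $\delta\leq\gamma\leq\beta$, $\mathcal{N}^s_{\delta}\subseteq\mathcal{N}^s_{\gamma}$'' (equivalently, prove $\mathcal{N}^s_{\gamma}\subseteq\mathcal{N}^s_{\gamma+1}$ for all $\gamma$ by strong induction, the limit stages being trivial since they are unions). With that strengthened hypothesis the appeal to monotonicity at the pair $(\delta,\gamma)$ is licensed, and the rest of your argument goes through verbatim.
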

\begin{proof} 
\begin{itemize}
\item [a)] By induction on $\beta$. For $\beta=0$ there is nothing to prove.

Successor step $\beta\to\beta+1$. Assume $\mathcal{N}^s_{\alpha}\subseteq\mathcal{N}^s_{\beta}$ for all $\alpha\leq\beta$. Let $\tau\in\mathcal{N}^s_{\alpha}$ for some $\alpha<\beta+1$. Then we know by assumption that $\tau\in\mathcal{N}^s_{\beta}$. So by Definition \ref{names}  there is some $\gamma<\beta$ such that $\tau=\{\langle\pi_i, p_i\rangle\,\vert\, i\in I\}$ where for each $i\in I$, $\pi_i\in\mathcal{N}^s_{\gamma}$ and $p_i\in P$. By assumption $\pi_i\in\mathcal{N}^s_{\beta}$ for all $i\in I$ and so $\tau\in\mathcal{N}^s_{\beta+1}$. 

Limit step $\lambda$. Assume $\mathcal{N}^s_{\alpha}\subseteq\mathcal{N}^s_{\beta}$ for all $\alpha\leq\beta<\lambda$. But by Definition \ref{names}, $\sigma\in\mathcal{N}^s_{\lambda}$ iff $\sigma\in\mathcal{N}^s_{\beta}$ for some $\beta<\lambda$ and so it follows that $\mathcal{N}^s_{\alpha}\subseteq \mathcal{N}^s_{\lambda}$ for all $\alpha\leq\lambda$.

\item[b)] By Definition \ref{names}, $\Sigma\in\mathcal{N}$ iff $\Sigma$ is a subclass of $\mathcal{N}^s\times P$ iff for every $\langle\tau, p\rangle\in\Sigma$, $\tau\in\mathcal{N}^s$ and $p\in P$ iff for every $\langle\tau, p\rangle\in\Sigma$ there is an ordinal $\alpha$ such that $\tau\in\mathcal{N}^s_{\alpha}$ and $p\in P$. Let $\sigma\in\mathcal{N}^s$, i.e. there is an ordinal $\beta$ such that $\sigma\in\mathcal{N}^s_{\beta}$. Then it holds that for every $\langle\tau, p\rangle\in\sigma$ there is an ordinal $\alpha<\beta$ such that $\tau\in\mathcal{N}^s_{\alpha}$ and $p\in P$. So $\sigma\in\mathcal{N}$.
\end{itemize}
\end{proof}
We define the interpretations of set- and class-names recursively.
\begin{defi}\label{interpretation}
\begin{itemize}
\item[]
\item[] $\sigma^G=\{\tau^{G}:\exists p\in G(\langle\tau, p\rangle\in\sigma)\}$ for $\sigma\in\mathcal{N}^s$.
\item[] $\Sigma^G=\{\sigma^{G}:\exists p\in G(\langle\sigma, p\rangle\in\Sigma)\}$ for $\Sigma\in\mathcal{N}$.
\end{itemize}
\end{defi}

According to the definitions above we define the extension of an MK model $(M, \mathcal{C})$ to be the extension of the set part and the extension of the class part:
\begin{defi}\label{extension}
$(M,\mathcal{C})[G]= (M[G],\mathcal{C}[G])=(\{\sigma^G:\sigma\in\mathcal{N}^s\},\{\Sigma^{G}:\Sigma\in\mathcal{N}\})$.
\end{defi}
\begin{defi}\label{canonicalnames}
If $P$ is a partial order with greatest element $1^P$, we define the canonical $P$-names of $x\in M$ and $C\in\mathcal{C}$:
\begin{itemize}
\item[] $\check{x}=\{\langle\check{y}, 1^P\rangle\,\vert\, y\in x\}$.
\item[] $\check{C}=\{\langle\check{x}, 1^P\rangle\,\vert\, x\in C\}$.
\end{itemize}
\end{defi}
From these definitions the basic facts of forcing follow easily:
\begin{lem}  \label{basics}
Let $\mathcal{M}=\langle M, \mathcal{C}\rangle$ be a model of MK, where $M$ is a transitive model of ZFC and $\mathcal{C}$ the family of classes of $\mathcal{M}$. Then it holds that:
\begin{itemize} 
\item[a)] $\forall x\in M(\check{x}\in \mathcal{N}^s\wedge \check{x}^G=x)$ and $\forall C\in\mathcal{C}(\check{C}\in\mathcal{N}\wedge\check{C}^{G}=C)$.
\item[b)] $(M,\mathcal{C})\subseteq(M,\mathcal{C})[G]$ in the sense that $M\subseteq M[G]$ and $\mathcal{C}\subseteq\mathcal{C}[G]$.
\item[c)] $G\in(M,\mathcal{C})[G]$, i.e. $G\in \mathcal{C}[G]$
\item[d)] $M[G]$ is transitive and $Ord(M[G]))=Ord(M)$. 
\item[e)] If $( N, \mathcal{C}')$ is a model of MK, $M\subseteq N$, $\mathcal{C}\subseteq\mathcal{C}'$, $G\in\mathcal{C}'$ then $(M,\mathcal{C})[G]\subseteq(N, \mathcal{C}')$. 

\end{itemize}
\end{lem}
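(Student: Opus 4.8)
The plan is to establish the five items in order, since each is either a direct consequence of the definitions or a short induction, with the later parts leaning on the earlier ones. Throughout, the essential tools are $\in$-induction on names (justified by the Recursion principle, Proposition \ref{recursion}) together with the Class-Comprehension and Replacement axioms of MK, which guarantee that the various naming and interpretation operations are genuine classes rather than mere metatheoretic abbreviations.

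For part a), I would first fix the canonical-name function $x \mapsto \check{x}$ as a class, obtained by applying Proposition \ref{recursion} to the $\in$-relation. A straightforward $\in$-induction then shows $\check{x} \in \mathcal{N}^s$: if each $\check{y}$ for $y \in x$ lies in some $\mathcal{N}^s_{\alpha_y}$, Replacement in $M$ supplies a common bound $\beta$, so by Definition \ref{names} the set $\check{x} \subseteq \mathcal{N}^s_\beta \times P$ is an element of $\mathcal{N}^s_{\beta+1}$. To compute $\check{x}^G = x$ I first note that $1^P \in G$: the whole order $P$ is trivially dense, so $G \neq \emptyset$, and upward closure forces the greatest element $1^P$ into $G$. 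Since every pair in $\check{x}$ has second coordinate $1^P$, Definition \ref{interpretation} gives $\check{x}^G = \{\check{y}^G : y \in x\}$, which equals $x$ by the induction hypothesis. The class case $\check{C} \in \mathcal{N}$, $\check{C}^G = C$ is analogous: by Definition \ref{canonicalnames}, $\check{C}$ is a subclass of $\mathcal{N}^s \times P$ definable from the parameter $C$ and the canonical-name class, hence lies in $\mathcal{C}$ by Class-Comprehension, and its interpretation unwinds to $C$. Part b) is then immediate, since $x = \check{x}^G \in M[G]$ and $C = \check{C}^G \in \mathcal{C}[G]$. For part c) I would exhibit the standard name $\dot{G} = \{\langle \check{p}, p \rangle : p \in P\}$; it is a subclass of $\mathcal{N}^s \times P$ lying in $\mathcal{C}$ by Class-Comprehension (using $P \in \mathcal{C}$ and the canonical-name class), so $\dot{G} \in \mathcal{N}$, and Definition \ref{interpretation} yields $\dot{G}^G = \{\check{p}^G : p \in G\} = G$.

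For part d), transitivity is read off Definition \ref{interpretation} directly: any member of $\sigma^G$ is some $\tau^G$ with $\tau \in \mathcal{N}^s$, hence again lies in $M[G]$. For the ordinals, $Ord(M) \subseteq Ord(M[G])$ follows from part b), and for the reverse inclusion I would prove by induction on names that $\mathrm{rank}(\sigma^G) \leq \mathrm{rank}(\sigma)$; since every set-name is itself an element of $M$ (each $\mathcal{N}^s_\alpha$ consists of sets of $M$), its rank is an ordinal of $M$, so no new ordinals can enter $M[G]$. For part e), given a larger model $(N, \mathcal{C}')$ of MK with $M \subseteq N$, $\mathcal{C} \subseteq \mathcal{C}'$ and $G \in \mathcal{C}'$, I would argue that the recursion computing $\sigma^G$ from the parameters $\sigma \in M \subseteq N$ and $G \in \mathcal{C}'$ is absolute between $(M,\mathcal{C})[G]$ and $(N,\mathcal{C}')$, so each $\sigma^G$ is a set of $N$; similarly each $\Sigma^G$, being definable from $\Sigma \in \mathcal{C}'$ and $G$, belongs to $\mathcal{C}'$ by Class-Comprehension in $(N,\mathcal{C}')$.

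The routine inductions aside, the one place demanding genuine care is the bookkeeping that keeps every construction inside $\mathcal{C}$: because the $\mathcal{N}^s_\alpha$, $\mathcal{N}^s$, and the interpretation maps are all proper classes, I must at each step produce the relevant object as the value of a legitimate application of Class-Comprehension or Proposition \ref{recursion}, rather than treating it as an informal collection. The main obstacle is precisely the verification in parts c) and e) that $\dot{G}^G$ and the transferred interpretations $\Sigma^G$ really are classes of the ambient model — this is exactly where MK's full (class-quantifier) Comprehension, as opposed to the weaker GB schema, is being used.
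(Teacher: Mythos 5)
Your proposal is correct and follows essentially the same route as the paper's proof: canonical names handled by induction using Definitions \ref{interpretation} and \ref{canonicalnames}, the name $\{\langle\check{p},p\rangle : p\in P\}$ for part c), the rank bound $\mathrm{rank}(\sigma^G)\leq\mathrm{rank}(\sigma)$ for part d), and absoluteness of interpretation for part e). You simply fill in the bookkeeping (Replacement to bound ranks, $1^P\in G$, Class-Comprehension to keep names in $\mathcal{C}$) that the paper leaves implicit.
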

\begin{proof}
a): Using Definition \ref{interpretation} and Definition \ref{canonicalnames} we can easily show this by induction.
 
 b): follows immediately from 1.
 
 c): Let $\Gamma=\{\langle\check{p},p\rangle:p\in P\}$. Then this is a name for $G$ as $\Gamma^G=\{\check{p}^{G}\,\vert\,p\in G\}=\{p\,\vert\,p\in G\}=G$. 
 
 d) It follows from Definition \ref{interpretation} and Definition \ref{extension} that $M[G]$ is transitive. For every $\sigma\in N^s$ 
  the rank of $\sigma^{G}$ is at most rank $\sigma$, so $Ord(M)[G])\subseteq Ord(M)$. 
  
  e) For each name $\Sigma\in \mathcal{N}$, $\Sigma\in(M, \mathcal{C})$ and therefore $\Sigma\in (N, \mathcal{C}')$. As $G\in \mathcal{C}'$ the interpretation of $\Sigma$ in $(M,\mathcal{C})[G]$ is the same as in $(N,\mathcal{C}')$.
  
\end{proof}
\section{Definability and Truth Lemmas}
We will define the forcing relation and show that it is definable in the ground model and how it relates to truth in the extension. The main focus will be the Definability Lemma, since it now is possible to prove that it holds for all forcing notions in contrast to $A$-definable class-forcings in a ZFC setting. Note that when we talk about a formula $\varphi(x_1,\ldots,x_m,X_1,\ldots,X_n)$ we mean $\varphi$ to be a second-order formula that allows second-order quantification and we always assume the model $(M, \mathcal{C})$ to be countable.
\begin{defi}
Suppose $p$ belongs to $P$, $\varphi(x_1,\ldots,x_m,X_1,\ldots,X_n)$ is a formula, $\sigma_1,\ldots,\sigma_m$ are set-names and $\Sigma_1,\ldots,\Sigma_n$ are class-names. We write $p\Vdash\varphi(\sigma_1,\ldots,\sigma_m,\Sigma_1,\ldots,\Sigma_n)$ iff whenever $G\subseteq P$ is $P$-generic over $(M,\mathcal{C})$ and $p\in P$, we have $(M,\mathcal{C})[G]\models \varphi(\sigma^G_1,\ldots,\sigma^G_m,\Sigma^G_1,\ldots,\Sigma^G_n)$.
\end{defi}
\begin{lem}[Definability Lemma]\label{definability}
For any $\varphi$, the relation ``$p\Vdash\varphi(\sigma_1,\ldots,\sigma_m,\Sigma_1,\ldots,\Sigma_n)$'' of $p$, $\vec{\sigma}$, $\vec{\Sigma}$ is definable in $(M,\mathcal{C})$.
\end{lem}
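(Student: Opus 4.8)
The plan is to introduce an auxiliary, syntactically defined forcing relation $\Vdash^{*}$ inside $(M,\mathcal C)$, prove that it is definable, and then identify it with the semantic relation $\Vdash$; the lemma then follows at once. I would define $\Vdash^{*}$ by a double recursion: an internal recursion on the rank of names for the atomic formulas, legitimated by the Recursion principle (Proposition \ref{recursion}), and an external recursion on the logical complexity of $\varphi$ for the compound ones.

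For the atomic case I would give the usual simultaneous clauses. For $\sigma,\tau\in\mathcal N^{s}$,
\begin{align*}
p\Vdash^{*}\sigma\in\tau \ &\Longleftrightarrow\ \{q\le p:\exists\langle\pi,s\rangle\in\tau\,(q\le s\wedge q\Vdash^{*}\sigma=\pi)\}\text{ is dense below }p,\\
p\Vdash^{*}\sigma=\tau \ &\Longleftrightarrow\ \forall\langle\pi,s\rangle\in\sigma\,\{q\le p:q\le s\to q\Vdash^{*}\pi\in\tau\}\text{ is dense below }p\text{, and symmetrically,}
\end{align*}
together with the same membership clause with a class-name $\Sigma\in\mathcal N$ in place of $\tau$. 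Since every clause refers only to names of strictly smaller rank, this is a legitimate recursion over the well-founded rank relation on names, so Proposition \ref{recursion} furnishes the relation as an object of $\mathcal C$, and in particular as one definable over $(M,\mathcal C)$. For the compound cases I would set $p\Vdash^{*}(\varphi\wedge\psi)$ iff $p\Vdash^{*}\varphi$ and $p\Vdash^{*}\psi$; $p\Vdash^{*}\neg\varphi$ iff no $q\le p$ has $q\Vdash^{*}\varphi$; $p\Vdash^{*}\exists x\,\varphi$ iff $\{q\le p:\exists\sigma\in\mathcal N^{s}\,(q\Vdash^{*}\varphi(\sigma))\}$ is dense below $p$; and $p\Vdash^{*}\exists X\,\varphi$ iff $\{q\le p:\exists\Sigma\in\mathcal N\,(q\Vdash^{*}\varphi(\Sigma))\}$ is dense below $p$.

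Definability of $\Vdash^{*}$ is then proved by induction on $\varphi$. The Boolean steps and the first-order quantifier step preserve definability routinely, the set-quantifier $\exists x$ being a quantifier ranging over the class $\mathcal N^{s}$. The decisive step is the class-quantifier $\exists X$: here the defining condition quantifies over the proper class $\mathcal N$ of all class-names, and it is exactly the Class-Comprehension Axiom of MK, which permits formulas with quantifiers over classes, that guarantees the resulting relation is again definable in $(M,\mathcal C)$. This is the point at which the argument uses the full strength of MK, and it is where the corresponding argument in GB or in the ZFC setting breaks down and one is forced to impose tameness.

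Finally I would show $\Vdash\,=\,\Vdash^{*}$ by induction on $\varphi$, which together with the preceding yields the lemma. Both the soundness direction (if $p\Vdash^{*}\varphi$ and $p\in G$ is generic then $(M,\mathcal C)[G]\models\varphi$) and its converse are obtained by unwinding the clauses against an arbitrary generic $G$; the existential cases are where genericity enters, via the density conditions and the standing assumption that $M$ and $\mathcal C$ are countable, so that below every condition there is a generic filter meeting any prescribed dense class. I expect the main obstacle to be precisely the handling of the unbounded searches over the proper classes $\mathcal N^{s}$ and $\mathcal N$ in the quantifier cases, both in justifying that the recursion of Proposition \ref{recursion} applies and in verifying that the required density witnesses can always be found. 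This is resolved by the comprehension and recursion principles of MK rather than by any restriction on the forcing notion $P$.
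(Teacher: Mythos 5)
Your overall architecture coincides with the paper's: introduce a syntactic relation $\Vdash^{*}$, prove its definability (atomic cases by recursion on names, compound cases by external induction on $\varphi$), and then identify $\Vdash^{*}$ with $\Vdash$ via the Truth Lemma. Your atomic clauses are a Kunen-style variant of the paper's Definition \ref{forcing}, and your quantifier clauses are the dual (density/existential) form of the paper's universal ones; both variants are serviceable. However, there are two genuine gaps in how you justify the atomic case, which is where all the real work of the lemma lies. First, your claim that ``every clause refers only to names of strictly smaller rank'' is false as stated, so Proposition \ref{recursion} cannot be invoked for a recursion simply ``on the rank of names.'' In your clause for $p\Vdash^{*}\sigma\in\tau$ the recursive call is to $q\Vdash^{*}\sigma=\pi$, where $\pi$ has smaller rank than $\tau$ but $\sigma$ is carried along with its rank unchanged; symmetrically, your clause for $\sigma=\tau$ calls $\pi\in\tau$ with $\tau$ unchanged. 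The recursion is therefore on tuples $(p,\sigma,e,\tau)$, and one must exhibit a well-founded order on such tuples (the paper's Definition \ref{andrew}: first compare $\max(\mathrm{rank}\,\sigma,\mathrm{rank}\,\tau)$, then which side carries the maximum, then the symbol $e$; alternatively the natural sum of the two ranks works for your clauses) and verify clause by clause that each recursive call descends in it, and then reorganize this tuple-recursion into an ordinal-indexed one so that Proposition \ref{recursion} applies. The paper spends roughly a page on exactly this verification (Definition \ref{andrew} and cases A.1--A.3, B.1--B.3 in the proof of Lemma \ref{definabilityset}); it is the step your proposal waves through.

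Second, you locate the essential use of MK at the wrong place. For the second-order quantifier clause, definability is a purely syntactic matter: ``for all $\Sigma\subseteq\mathcal{N}^{s}\times P$, $p\Vdash^{*}\varphi(\Sigma)$'' is a formula of the two-sorted language, and the language of G\"odel--Bernays admits class quantifiers just as MK's does --- GB restricts only which formulas may be used in Comprehension, not which formulas exist. So nothing about that clause ``breaks down in GB,'' and no appeal to Class-Comprehension is needed (nor possible: a relation with class arguments is not a class of tuples at all). Where MK's impredicative comprehension is genuinely and unavoidably used is in proving that the atomic recursion has a solution: at limit stages $\lambda$ one defines the approximating class $X_{\lambda}$ by a formula asserting ``there exists a sequence of classes $\langle R_{\gamma},S_{\gamma}\rangle$ satisfying the recursion conditions,'' and Comprehension must be applied to this formula, which quantifies over classes (the paper notes that $\Delta^{1}_{1}$ Class-Comprehension suffices). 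This is precisely the step the paper flags as possible in MK but not in GB, and it is the content of the footnoted remark that Proposition \ref{recursion} itself (whose proof takes a union over class-many uniquely determined class approximations) already rests on this strength. Since the contrast with GB and with $A$-definable class forcing in ZFC is the conceptual point of the lemma, this misattribution should be corrected even though your proof, once routed correctly through Proposition \ref{recursion} with a legitimate well-founded order, does yield the statement.
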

\begin{lem}[Truth Lemma]\label{truthlemma}
If $G$ is $P$-generic over $(M, \mathcal{C})$ then 
\begin{equation*}
(M,\mathcal{C})[G]\models\varphi(\sigma^G_1,\ldots,\sigma^G_m,\Sigma^G_1,\ldots,\Sigma^G_n)\Leftrightarrow\exists p\in G\,(p\Vdash\varphi(\sigma_1,\ldots,\sigma_m,\Sigma_1,\ldots,\Sigma_n)).
\end{equation*}
\end{lem}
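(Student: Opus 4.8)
The plan is to prove the two directions separately, and to treat the Truth Lemma together with the Definability Lemma \ref{definability} by a simultaneous induction on the complexity of $\varphi$. The backward direction is essentially free from the chosen (semantic) definition of the forcing relation: if $p\in G$ and $p\Vdash\varphi(\vec\sigma,\vec\Sigma)$, then since $G$ is $P$-generic over $(M,\mathcal{C})$ and contains $p$, the very definition of $p\Vdash\varphi$ yields $(M,\mathcal{C})[G]\models\varphi(\vec\sigma^G,\vec\Sigma^G)$. So all of the content lies in the forward direction.

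For the forward direction I would argue through a density-of-deciding argument. Fix $G$ and assume $(M,\mathcal{C})[G]\models\varphi(\vec\sigma^G,\vec\Sigma^G)$. Consider the class
\[
D_\varphi=\{p\in P: p\Vdash\varphi(\vec\sigma,\vec\Sigma)\ \text{or}\ p\Vdash\neg\varphi(\vec\sigma,\vec\Sigma)\}.
\]
By the Definability Lemma \ref{definability} the relation $p\Vdash\varphi$ is definable over $(M,\mathcal{C})$, so by the impredicative Class-Comprehension axiom of MK we obtain $D_\varphi\in\mathcal{C}$. The key point is that $D_\varphi$ is dense: given $q\in P$, either some $r\leq q$ forces $\varphi$, in which case $r\in D_\varphi$, or no extension of $q$ forces $\varphi$, in which case I claim $q\Vdash\neg\varphi$ and hence $q\in D_\varphi$. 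Granting density, genericity (clause 3 of the definition of $P$-generic, applicable precisely because $D_\varphi\in\mathcal{C}$) yields some $p\in G\cap D_\varphi$. If $p\Vdash\neg\varphi$, the backward direction established above forces $(M,\mathcal{C})[G]\models\neg\varphi$, contradicting our assumption; hence $p\Vdash\varphi$, which is what we want.

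The remaining and substantive task is to justify the clause used above, namely that ``no extension of $q$ forces $\varphi$'' implies $q\Vdash\neg\varphi$, and more generally to show that the semantic relation $\Vdash$ obeys the usual recursive forcing clauses. This I would carry out by the simultaneous induction on $\varphi$ already mentioned, tracking the clauses that the Definability Lemma \ref{definability} establishes: atomic formulas are handled by induction on the rank of the set-names, using Definitions \ref{names} and \ref{interpretation}; conjunction is immediate; negation is exactly the ``no extension forces'' clause; and the first-order existential case $\exists x\,\varphi(x)$ uses that every element of $M[G]$ is of the form $\sigma^G$ for some $\sigma\in\mathcal{N}^s$ (Definition \ref{extension}), so that a witness in the extension yields a name and the induction hypothesis produces a condition in $G$ forcing $\varphi(\sigma)$.

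The main obstacle, and the genuinely new feature of the MK setting, is the second-order existential quantifier $\exists X\,\varphi(X)$. Here the witnessing class $W\in\mathcal{C}[G]$ must be captured by a class-name: by Definition \ref{extension} we have $W=\Sigma^G$ for some $\Sigma\in\mathcal{N}$, and the induction hypothesis applied to $\varphi(\Sigma)$ then supplies a condition in $G$ forcing it. Making this step legitimate requires that the deciding classes and the ``dense below $p$'' classes occurring in the quantifier clauses all lie in $\mathcal{C}$; this is exactly where the full impredicative Class-Comprehension of MK (as opposed to the predicative comprehension of GB) is indispensable, and it is the reason the Definability Lemma \ref{definability} can be proved without any tameness restriction on $P$. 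I expect the careful bookkeeping of this second-order case, together with avoiding circularity between the negation clause and density within the simultaneous induction, to be where the real difficulty concentrates.
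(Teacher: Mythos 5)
Your backward direction (immediate from the semantic definition of $\Vdash$) and your treatment of the quantifier cases (every member of $M[G]$, resp.\ $\mathcal{C}[G]$, is the interpretation of a name, so a witness in the extension lifts to a name) agree with the paper, and a simultaneous induction with definability is in principle a viable organization. But the forward direction as you set it up is circular, and the idea that breaks the circle is exactly what your sketch is missing. The density of your deciding class $D_\varphi=\{p: p\Vdash\varphi\ \text{or}\ p\Vdash\neg\varphi\}$ rests on the claim that if no $r\leq q$ forces $\varphi$ then $q\Vdash\neg\varphi$. For the \emph{semantic} relation $\Vdash$ this is not a clause one can verify locally: unwinding it, one takes a generic $G'\ni q$ with $(M,\mathcal{C})[G']\models\varphi$ and must produce some $r\in G'$ with $r\leq q$ and $r\Vdash\varphi$ --- which is precisely the forward direction of the Truth Lemma for the \emph{same} formula $\varphi$. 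So the claim for $\varphi$ and the Truth Lemma for $\varphi$ are equivalent (modulo definability, monotonicity, and directedness of generic filters), and your induction proves each from the other at the same stage. You flag ``avoiding circularity'' as bookkeeping to be done later, but it is not bookkeeping: the dependency order has to be inverted.

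There are two ways to invert it. The paper's way is to introduce an auxiliary, recursively defined relation $\Vdash^*$ (Definition \ref{forcing}), for which the negation clause $p\Vdash^*\neg\varphi\leftrightarrow\forall q\leq p\,\neg(q\Vdash^*\varphi)$ holds \emph{by definition}; then your claim (for $\Vdash^*$) is Lemma \ref{auxlemma} c) and is provable outright, the Truth Lemma is proved for $\Vdash^*$ by induction on $\varphi$ (Lemma \ref{truthlemmaset}) using dense classes tailored to the outermost connective, whose density invokes Lemma \ref{auxlemma} only for subformulas and instances, and finally $\Vdash^*=\Vdash$ transfers everything to the semantic relation. Note that this is also how the paper obtains Lemma \ref{definability} for $\Vdash$ itself, so you cannot cite that lemma as an independent input to your argument: its only available proof runs through the same machinery you are trying to avoid. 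Alternatively, one can stay with the semantic relation but restructure: at each step derive the Truth Lemma for $\varphi$ from the Truth Lemma \emph{and} the claim for proper subformulas --- conjunction needs no dense class at all, only directedness of $G$ and monotonicity; negation $\neg\psi$ uses $D=\{p: p\Vdash\psi\}\cup\{p: p\Vdash\neg\psi\}$, dense by the claim for $\psi$; $\forall X\,\psi$ uses $\{p: p\Vdash\forall X\psi\}\cup\{p: \exists\Sigma\ p\Vdash\neg\psi(\Sigma)\}$; atomic formulas need a separate induction along the name-rank order of Definition \ref{andrew} --- and only \emph{then} conclude the claim for $\varphi$ as a corollary. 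Either way, the per-connective dense classes (not the uniform deciding class) are the missing ingredient.
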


Following the approach of set-forcing we introduce a new relation $\Vdash^*$ and prove the Definability and Truth Lemma for this $\Vdash^*$. Then we will show that $\Vdash^*$ equals the intended forcing relation $\Vdash$.

The definition of $\Vdash^*$ consists of ten cases: six cases for atomic formulas, where the first two are for set-names, the second two for the ``hybrid'' of set- and class-names and the last two for class-names, one for $\wedge$ and $\neg$ respectively and two quantifier cases, one for first-order and one for second-order quantification. Of course formally there are only two cases for atomic formulas, namely case five and six of Definition \ref{forcing}. But by splitting the cases we can see very easily that it is enough to prove the Definability Lemma for set-names only (case one and two in the Definition) and then infer the general Definability Lemma by induction.

\begin{defi} 
$D\subseteq P$ is \emph{dense below} $p$ if $\forall q≤p\,\exists r\,(r≤q, r\in D)$.
\end{defi}
\begin{defi}\label{forcing}
Let $\sigma, \gamma, \pi$ be elements of $\mathcal{N}^s$ and $\Sigma, \Gamma$ elements of $\mathcal{N}$.  
\begin{enumerate}
\item $p\Vdash^*\sigma\in\gamma$ iff $\{q :\exists\langle\pi, r\rangle\in \gamma\text{ such that }q\leq r, q\Vdash^*\pi=\sigma\}$ is dense below $p$.
\item $p\Vdash^*\sigma=\gamma$ iff for all $\langle\pi, r\rangle\in\sigma\cup\gamma,\; p\Vdash^*(\pi\in\sigma\leftrightarrow\pi\in\gamma)$.
\item $p\Vdash^* \sigma\in\Sigma$ iff $\{q :\exists\langle\pi, r\rangle\in \Sigma\text{ such that }q\leq r, q\Vdash^*\pi=\sigma\}$ is dense below $p$.
\item $p\Vdash^*\sigma=\Sigma$ iff for all $\langle\pi, r\rangle\in\sigma\cup\Sigma,\; p\Vdash^*(\pi\in\sigma\leftrightarrow\pi\in\Sigma)$.
\item $p\Vdash^*\Sigma\in\Gamma$ iff $\{q :\exists\langle\pi, r\rangle\in \Gamma\text{ such that }q\leq r, q\Vdash^*\pi=\Sigma\}$ is dense below $p$.
\item $p\Vdash^*\Sigma=\Gamma$ iff for all $\langle\pi, r\rangle\in\Sigma\cup\Gamma,\; p\Vdash^*(\pi\in\Sigma\leftrightarrow\pi\in\Gamma)$.

\item $p\Vdash^*\varphi\wedge\psi$ iff $p\Vdash^*\varphi$ and $p\Vdash^*\psi$.
\item $p\Vdash^*\neg\varphi$ iff $\forall q≤p\,(\neg\, q\Vdash^*\varphi)$.
\item $p\Vdash^*\forall x\varphi$ iff for all $\sigma$, $p\Vdash^*\varphi(\sigma)$.
\item $p\Vdash^*\forall X\varphi$ iff for all $\Sigma$, $p\Vdash^*\varphi(\Sigma)$.
\end{enumerate}
\end{defi}

We have to show that $\Vdash^*$ is definable within the ground model. For this it is enough to concentrate on the first two of the above cases, because we can reduce the definability of the $\Vdash^*$-relation for arbitrary second-order formulas to its definability for atomic formulas $\sigma\in\tau$, $\sigma=\tau$, where $\sigma$ and $\tau$ are set-names. The rest of the cases then follow by induction. 
So let us restate Lemma \ref{truthlemma} for the case of $\Vdash^*$ and set-names:
\begin{lem}[Definability Lemma for the atomic cases of set-names]\label{definabilityset}
The relation ``$p\Vdash^*\varphi(\sigma, \tau)$'' is definable in $(M, \mathcal{C})$ for $\varphi= ``\sigma\in \tau$'' and $\varphi=``\sigma=\tau$''.
\end{lem}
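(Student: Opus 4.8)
The plan is to prove this by a simultaneous induction that mirrors the recursive structure of Definition \ref{forcing}, cases 1 and 2. The two relations $p \Vdash^* \sigma \in \tau$ and $p \Vdash^* \sigma = \tau$ are defined by mutual recursion, and the natural well-founded relation controlling this recursion is a measure on the pair of names, such as $\max(\mathrm{rank}(\sigma), \mathrm{rank}(\tau))$ together with a subsidiary measure distinguishing the ``$\in$'' clause from the ``$=$'' clause. Concretely, defining $p \Vdash^* \sigma = \tau$ quantifies over pairs $\langle \pi, r \rangle \in \sigma \cup \tau$ and invokes the relation $\pi \in \sigma$ and $\pi \in \tau$, where each $\pi$ has strictly smaller rank than $\sigma$ or $\tau$; defining $p \Vdash^* \sigma \in \tau$ invokes $q \Vdash^* \pi = \sigma$ for pairs $\langle \pi, r \rangle \in \tau$, where again $\pi$ has rank below that of $\tau$. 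So the recursion descends in the name-rank, and I would set up an ordering on triples (formula-type, $\mathrm{rank}(\sigma)$, $\mathrm{rank}(\tau)$) on which the defining clauses strictly decrease.

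First I would make precise the well-founded relation and verify that each clause of cases 1 and 2 refers only to instances of $\Vdash^*$ that are strictly lower in this relation; this is the content check that licenses a recursive definition at all. Then the core step is to apply the Recursion principle, Proposition \ref{recursion}, to obtain a single class $S \in \mathcal{C}$ coding the relation $\{(p, \sigma, \tau, i) : p \Vdash^* \varphi_i(\sigma,\tau)\}$, where $i \in \{0,1\}$ selects between ``$\in$'' and ``$=$''. The formula $\varphi(X, Y)$ fed into Proposition \ref{recursion} must express: given the already-constructed fragment $S_{<\alpha}$ encoding all lower instances, the value $S_\alpha$ is determined by reading off the density and quantifier conditions of Definition \ref{forcing}. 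The key point is that ``$D$ is dense below $p$'' is a condition expressible by a formula over $(M,\mathcal{C})$ once the set $D$ — which here is $\{q : \exists \langle \pi, r\rangle \in \tau\, (q \leq r \wedge q \Vdash^* \pi = \sigma)\}$ — is itself available, and by the induction hypothesis the inner relation $q \Vdash^* \pi = \sigma$ is already decided by $S_{<\alpha}$. Here the ambient strength of MK is what makes the argument go through smoothly: Class-Comprehension allows the formula $\varphi$ to quantify over classes, so the density clause, which ranges over all $q \leq p$ and seeks witnesses $r$, is a legitimate instance producing a class.

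The main obstacle, and the step I would dwell on, is establishing that the recursion genuinely produces a \emph{definable} relation over $(M,\mathcal{C})$ rather than merely a class that exists. The subtlety particular to the class-forcing setting is that the partial orders $P$ and $\le_P$ are proper classes, so the quantifier ``$\{q : \ldots\}$ is dense below $p$'' ranges over a proper class of conditions, and one must confirm that Class-Comprehension in MK — precisely because it permits quantification over classes, unlike the restricted comprehension of GB — suffices to form this class and to express the density predicate uniformly in $p, \sigma, \tau$. I expect the verification to hinge on checking that the formula $\varphi(X,Y)$ witnessing the recursion satisfies $\forall X\, \exists!\, Y\, \varphi(X,Y)$, i.e. that at each rank the forcing relation is uniquely and unambiguously determined by the lower fragment, and that this determination is itself given by a single second-order formula. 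Once Proposition \ref{recursion} yields the class $S$, definability of ``$p \Vdash^* \sigma \in \tau$'' and ``$p \Vdash^* \sigma = \tau$'' in $(M,\mathcal{C})$ follows immediately, since membership in $S$ is definable by the very formula used to build it. The remaining eight cases of Definition \ref{forcing}, covering the class-name atomic cases, the connectives, and both quantifiers, then follow by a straightforward outer induction on formula complexity, which is why it suffices to treat only these two base cases here.
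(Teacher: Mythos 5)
Your overall strategy coincides with the paper's: define the two atomic relations by a simultaneous recursion along a well-founded order on tuples $(p,\sigma,e,\tau)$ built from the ranks of the names and the symbol $e\in\{\text{``}\in\text{''},\text{``}=\text{''}\}$, and use the strength of MK Class-Comprehension (quantification over classes, in fact only $\Delta^1_1$) to turn that recursion into a single definable class --- the paper makes exactly the same points, including the contrast with GB. However, there is a genuine gap at the one step you leave schematic, and it is precisely the step carrying the combinatorial content of the paper's proof: the ordering you propose, $\max(\mathrm{rank}(\sigma),\mathrm{rank}(\tau))$ refined by a tiebreaker that merely distinguishes ``$\in$'' from ``$=$'', does not decrease along the defining clauses, under either orientation of the tiebreaker. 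Concretely, when $\mathrm{rank}(\sigma)=\mathrm{rank}(\tau)$, the clause for $p\Vdash^*\sigma\in\tau$ invokes $q'\Vdash^*\pi=\sigma$ for $\langle\pi,r\rangle\in\tau$, and here $\max(\mathrm{rank}(\pi),\mathrm{rank}(\sigma))=\max(\mathrm{rank}(\sigma),\mathrm{rank}(\tau))$, so well-foundedness would force ``$=$'' to lie strictly below ``$\in$''; but, again with $\mathrm{rank}(\sigma)=\mathrm{rank}(\tau)$, the clause for $p\Vdash^*\sigma=\tau$ invokes $p\Vdash^*\pi\in\tau$ with the same maximum, forcing ``$\in$'' strictly below ``$=$''. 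So no lexicographic order of the form (max rank, symbol) can witness the recursion.

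The paper's Definition \ref{andrew} repairs exactly this by inserting a middle coordinate: whether $\mathrm{rank}(\sigma)\geq\mathrm{rank}(\tau)$, i.e.\ which of the two names carries the maximum. Tuples whose first name has maximal rank sit above tuples (with the same maximum) whose first name does not, and the symbol comparison ``$\in$'' below ``$=$'' is only consulted when this shape is the same on both sides. One must then check six cases (the paper's A.1--A.3 and B.1--B.3) that every transition generated by clauses 1 and 2 of Definition \ref{forcing} strictly decreases this three-coordinate order; note that in both problematic transitions above the new pair has its maximum on the \emph{second} name, which is exactly what the middle clause exploits. A second, smaller repair: Proposition \ref{recursion} as stated applies to well-founded relations on $Ord$, not to a class order on tuples, so to quote it as you do you should first flatten the tuple order into an ordinal ranking (for instance $4\cdot\max(\mathrm{rank}(\sigma),\mathrm{rank}(\tau))+2\cdot\text{shape}+\text{symbol}$), or else carry out the construction explicitly as the paper does, with an outer induction on ordinals and an inner recursion on tuples, which is also where the uniqueness needed at limit stages is verified.
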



\begin{proof}
We will show by induction\footnote{To show how this induction works in the context of a class-theory we will not simply use Proposition \ref{induction} and \ref{recursion}, but rather give the complete construction.} on $\beta\in ORD$ that there are unique classes $X_{\beta}, Y_{\beta}\subseteq\beta\times M$ which define the $\Vdash^*$-relation for the first two cases of Definition \ref{forcing} in the following way: for all $\alpha<\beta$, $R_{\alpha}=(X_{\beta})_{\alpha}, S_{\alpha}=(Y_{\beta})_{\alpha}$ where $(X_{\beta})_{\alpha}=\{x\,\vert\,\langle\alpha,x\rangle\in X_{\beta}\}$ and
\begin{align*}
\tag{$\star$} R_{\alpha}=\{(p,\sigma,\in,\tau)\,\vert\, &p\in P, \sigma\text{ and }\tau\text{ are set }P\text{-names,}\\
&\text{rank}(\sigma)\text{ and rank}(\tau)<\alpha,\text{ for all } q\leq p\\
&\text{there is } q'\leq q \text{ and } \langle\pi,r\rangle\in\tau \text{ such that }\\
&q'\leq r\text{ and } (q',\pi,=,\sigma)\in S_{\alpha}\}
\end{align*}
and

\begin{align*}
\tag{$\star\star$} S_{\alpha}=\{(p,\sigma,=,\tau)\,\vert\, &p\in P, \sigma\text{ and }\tau\text{ are set }P\text{-names,}\\
&\text{rank}(\sigma)\text{ and rank}(\tau)<\alpha,\\
&\text{for all }\langle\pi,r\rangle\in\sigma\cup\tau\text{ such that }\\
&(p,\pi,\in,\sigma)\in R_{\alpha}\text{ iff }(p,\pi,\in,\tau)\in R_{\alpha}\}
\end{align*}

To show that $X_{\beta}$ and $Y_{\beta}$ are definable we will define the classes $R_{\alpha}$ and $S_{\alpha}$ at each step by recursion on the tupel $(p,\sigma, e,\tau)$ according to the following well-founded partial order on $P\times \mathcal{N}^s\times\{\text{``}\in\text{''},\text{``}=\text{''}\}\times\mathcal{N}^s$. 
\begin{defi}\label{andrew}
Suppose $(p,\sigma, e, \tau), (q, \sigma',e',\tau')\in P\times \mathcal{N}^s\times\{\text{``}\in\text{''},\text{``}=\text{''}\}\times\mathcal{N}^s$. Say that $(q,\sigma', e', \tau')<(p,\sigma, e, \tau)$if 
\begin{itemize}
\item $max(rank(\sigma',\tau')) <max(rank(\sigma,\tau))$, or
\item $max(rank(\sigma',\tau')) = max(rank(\sigma,\tau))$, and $rank(\sigma)$ $\geq$ $rank(\tau)$ but $rank(\sigma') < rank(\tau')$, or
\item $max(rank(\sigma',\tau')) = max(rank(\sigma,\tau))$, and $rank(\sigma)$ $\geq$ $rank(\tau)$ $\leftrightarrow$ $rank(\sigma')$ $\geq$ $rank(\tau')$, and $e$ is ``$=$'' and $e'$ is ``$\in$''.
\end{itemize}
\end{defi}
Note that clause 1 and 2 of Definition \ref{forcing} always reduce the $<$-rank of the members of $P\times \mathcal{N}^s\times\{\text{``}\in\text{''},\text{``}=\text{''}\}\times\mathcal{N}^s$.\\

``Successor step $\beta\to\beta+1$.'' We know that there are unique classes $X_{\beta}, Y_{\beta}$ such that for all $\alpha<\beta$, $R_{\alpha}=(X_{\beta})_{\alpha}, S_{\alpha}=(Y_{\beta})_{\alpha}$ and ($\star$) and ($\star\star$) hold. We want to show that there are unique classes $X_{\beta+1}, Y_{\beta+1}$ such that for all $\alpha<\beta+1$, $R_{\alpha}=(X_{\beta+1})_{\alpha}, S_{\alpha}=(Y_{\beta+1})_{\alpha}$ and ($\star$) and ($\star\star$) hold. So let for all $\alpha<\beta$ $(X_{\beta+1})_{\alpha}=(X_{\beta})_{\alpha}=R_{\alpha}$ and $(Y_{\beta+1})_{\alpha}=(Y_{\beta})_{\alpha}=S_{\alpha}$ and define $(X_{\beta+1})_{\beta}=R_{\beta}$ and $(Y_{\beta+1})_{\beta}=S_{\beta}$ uniquely as follows:
\begin{itemize}
\item[A)] $(p,\sigma,$``$\in$''$, \tau)\in R_{\beta}$ if and only if for all $q\leq p$ there is $q'\leq q$ and $\langle\pi, r\rangle\in\tau$ such that $q'\leq r$ and $(q',\pi,$``=''$, \sigma)\in S_{\beta}$.
\item[B)] $(p,\sigma,$``$=$''$, \tau)\in S_{\beta}$ if and only if for all $\langle\pi, r\rangle\in\sigma\cup\tau$: $(p,\pi,$``$\in$''$,\sigma)\in R_{\beta}\text{ iff }(p,\pi,$``$\in$''$,\tau)\in R_{\beta}$. 
\end{itemize}
These definitions clearly satisfy $(\star)$ and $(\star\star)$ and to see that they are indeed inductive definitions over the well-order defined in Definition \ref{andrew}, we consider the following three cases for each of the definitions A) and B):
\begin{enumerate}
\item rank$(\sigma)<\text{rank}(\tau)$
\item $\text{rank}(\tau)<\text{rank}(\sigma)$
\item rank$(\sigma)=\text{rank}(\tau)$
\end{enumerate}

Ad A.1: $(q',\pi,$``=''$, \sigma)<(p,\sigma,$``$\in$''$, \tau)$ because $\text{rank}(\sigma),\text{rank}(\pi)<\text{rank}(\tau)$ (first clause of Denfition \ref{andrew}).

Ad A.2: $(q',\pi,$``=''$, \sigma)<(p,\sigma,$``$\in$''$, \tau)$ because $\text{max(rank}(\pi),\text{rank}(\sigma))=\text{max(rank}(\sigma),\text{rank}(\tau))$ and $\text{rank}(\sigma)\geq\text{rank}(\tau)$ and $\text{rank}(\pi)<\text{rank}(\sigma)$ (second clause of Definition \ref{andrew}).

Ad A.3: $(q',\pi,$``=''$, \sigma)<(p,\sigma,$``$\in$''$, \tau)$ because $\text{max(rank}(\pi),\text{rank}(\sigma))=\text{max(rank}(\sigma),\text{rank}(\tau))$ and $\text{rank}(\sigma)\geq\text{rank}(\tau)$ and $\text{rank}(\pi)<\text{rank}(\sigma)=\text{rank}(\tau)$ (second clause of Definition \ref{andrew}).\\

Ad B.1: $(p,\pi,$``$\in$''$,\sigma)<(p,\sigma,$``$=$''$, \tau)$ because $\text{rank}(\sigma),\text{rank}(\pi)<\text{rank}(\tau)$ and $(p,\pi,$``$\in$''$,\tau)<(p,\sigma,$``$=$''$, \tau)$ because $\text{max(rank}(\pi),\text{rank}(\tau))=\text{max(rank}(\sigma),\text{rank}(\tau))$ and $\text{rank}(\sigma)<\text{rank}(\tau)$ and $\text{rank}(\pi)<\text{rank}(\tau)$ (third clause of Definition \ref{andrew}).

Ad B.2: $(p,\pi,$``$\in$''$,\sigma)<(p,\sigma,$``$=$''$, \tau)$ because of the second clause of Definition \ref{andrew} and $(p,\pi,$``$\in$''$,\tau)< (p,\sigma,$``$=$''$, \tau)$ because $\text{rank}(\pi),\text{rank}(\tau)<\text{rank}(\sigma)$.

Ad B.3: $(p,\pi,$``$\in$''$,\sigma)<(p,\sigma,$``$=$''$, \tau)$ and $(p,\pi,$``$\in$''$,\tau)< (p,\sigma,$``$=$''$, \tau)$ because $\text{max(rank}(\pi),\text{rank}(\tau))=\text{max(rank}(\sigma),\text{rank}(\tau))$ and $\text{rank}(\sigma)\geq\text{rank}(\tau)$ and $\text{rank}(\pi)<\text{rank}(\sigma),\text{rank}(\tau)$ (both second clause of Definition \ref{andrew}). \\

``Limit step $\lambda$.'' We know that for every $\beta<\lambda$ there are unique classes $X_{\beta}, Y_{\beta}$ such that for all $\alpha<\beta$, $R_{\alpha}=(X_{\beta})_{\alpha}, S_{\alpha}=(Y_{\beta})_{\alpha}$ and ($\star$) and ($\star\star$) hold. We have to show that there are unique classes $X_{\lambda}, Y_{\lambda}\subseteq\lambda\times M$, $\lambda$ limit, such that for all $\beta<\lambda$, $R_{\beta}=(X_{\lambda})_{\beta}, S_{\beta}=(Y_{\lambda})_{\beta}$ and $(\star)$ and $(\star\star)$ hold respectively. We define the required classes as follows:
\begin{align*}
\langle\alpha, x\rangle\in X_{\lambda}\leftrightarrow\,&\exists\langle\langle R_{\gamma}, S_{\gamma}\rangle\,\vert\,\gamma\leq\alpha\rangle\,\exists X,Y((\forall\gamma\leq\alpha ((X)_{\gamma}=R_{\gamma}\text{ and }\\
&(Y)_{\gamma}=S_{\gamma}\text{ and they satisfy $(\star)$ and $(\star\star)$ resp.})\,\wedge\\
&(x\in(X)_{\gamma}\text{ for some }\gamma\leq\alpha))
\end{align*}
\begin{align*}
\langle\alpha, x\rangle\in Y_{\lambda}\leftrightarrow&\,\exists\langle\langle R_{\gamma}, S_{\gamma}\rangle\,\vert\,\gamma\leq\alpha\rangle\,\exists X,Y((\forall\gamma\leq\alpha ((X)_{\gamma}=R_{\gamma}\text{ and }\\
&(Y)_{\gamma}=S_{\gamma}\text{ and they satisfy $(\star)$ and $(\star\star)$ resp.})\,\wedge\\
&(x\in(Y)_{\gamma}\text{ for some }\gamma\leq\alpha))
\end{align*}
From the proof of the successor step we see that the sequence $\langle\langle R_{\gamma}, S_{\gamma}\rangle\,\vert\,\gamma\leq\alpha\rangle$ is unique for every $\alpha<\lambda$ and therefore $X_{\lambda}, Y_{\lambda}$ are also unique.
This definition is possible only in Morse-Kelly with its version of Class-Comprehension and not in Gödel-Bernays, because we are quantifying over class variables (in fact we only need $\Delta^1_1$ Class-Comprehension).
\end{proof}
The general Definability Lemma now follows immediately from this Lemma and Definition \ref{forcing}. We now turn to the Truth Lemma.  

In the following a capital greek letter denotes a name from $\mathcal{N}$ (and therefore can be a set- or a class-name), whereas a lower case greek letter is a name from $\mathcal{N}^s$ (and therefore can only be a set-name).
\begin{lem}\label{auxlemma} 
\begin{itemize}
\item[]
\item[a)] If $p\Vdash^*\varphi$ and $q\leq p$ then $q\Vdash^*\varphi$
\item[b)] If $\{ p\,\vert\, q\Vdash^*\varphi\}$ is dense below $p$ then $p\Vdash^*\varphi$.
\item[c)] If $\neg p\Vdash^*\varphi$ then $\exists q\leq p(q\Vdash^*\neg\varphi)$.
\end{itemize}
\end{lem}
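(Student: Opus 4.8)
The plan is to establish (a) and (b) together by induction along the same well-founded order that underlies Definition~\ref{forcing} and Lemma~\ref{definabilityset}: induction on the logical complexity of $\varphi$, with the atomic formulas ordered by the relation of Definition~\ref{andrew}. Part (c) will then fall out of (b) by unwinding definitions. Before starting I would record the one fact that does almost all the work, namely \emph{density persistence}: if $D$ is dense below $p$ and $q\leq p$, then $D$ is dense below $q$, which is immediate from transitivity of $\leq$.

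For (a) I would go through the clauses of Definition~\ref{forcing}. The atomic membership clauses (1), (3), (5) are immediate, since $p\Vdash^*\sigma\in\gamma$ just asserts that a certain witnessing set is dense below $p$, and density persistence transfers this to any $q\leq p$. The atomic equality clauses (2), (4), (6) unfold to $p\Vdash^*(\pi\in\sigma\leftrightarrow\pi\in\gamma)$ for names $\pi$ of strictly smaller rank; since this biconditional is a Boolean combination of membership statements lower in the order of Definition~\ref{andrew}, the induction hypothesis applies and persistence carries over. The clauses for $\wedge$ and for both quantifiers reduce to the hypothesis applied to each conjunct or instance, and $\neg$ persists directly, because any $s\leq q\leq p$ already satisfies $s\leq p$.

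For (b), which I read as: if $\{q : q\Vdash^*\varphi\}$ is dense below $p$ then $p\Vdash^*\varphi$, I would again split on the form of $\varphi$, now with (a) available in full. The membership case is cleanest: given $t\leq p$, density yields $q\leq t$ with $q\Vdash^*\sigma\in\gamma$, and unfolding that statement produces an element of the witnessing set below some $s\leq q\leq t$, so the witnessing set is itself dense below $p$. For equality I pass to each pair $\langle\pi,r\rangle\in\sigma\cup\gamma$ and apply the hypothesis to the lower-rank formula $\pi\in\sigma\leftrightarrow\pi\in\gamma$, whose forcing set contains the given dense set. Conjunction and the quantifier cases are similar, using that the dense set is contained in the dense set for each conjunct or instance. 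The one delicate clause is $\neg\psi$: assuming $\{q : q\Vdash^*\neg\psi\}$ is dense below $p$ but $\neg(p\Vdash^*\neg\psi)$, I obtain $s\leq p$ with $s\Vdash^*\psi$; density then gives $q\leq s$ with $q\Vdash^*\neg\psi$, hence $\neg(q\Vdash^*\psi)$, while (a) applied to $s$ forces $q\Vdash^*\psi$, a contradiction. This is precisely the point where having proved (a) first is essential.

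Finally, (c) is a formal consequence of (b) and the definition of $\Vdash^*\neg$. Unwinding the quantifiers, ``no $q\leq p$ satisfies $q\Vdash^*\neg\varphi$'' is literally equivalent to ``$\{s : s\Vdash^*\varphi\}$ is dense below $p$''; by (b) the latter yields $p\Vdash^*\varphi$, and contraposing gives (c). I expect the main obstacle to be bookkeeping rather than insight: one must check that the induction behind (a) and (b) is genuinely well-founded even though the equality clause reduces an atomic formula to a more complex biconditional. This is exactly what Definition~\ref{andrew} secures, since the membership subformulas produced from an equality statement strictly lower the name-rank, keeping the combined order (name-rank for atomic formulas, subformula complexity above it) well-founded.
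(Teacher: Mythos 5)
Your proof is correct and takes essentially the same route as the paper's: induction on $\varphi$, with density persistence giving (a), the fact that a set which is dense below densely many $q\leq p$ is itself dense below $p$ giving the atomic case of (b) (the paper cites this as ``a well-known fact'' where you prove it inline), the negation case of (b) handled via (a), and (c) obtained by unwinding the definition of $\Vdash^*\neg\varphi$ and contraposing (b). The only difference is one of detail, not of substance: you make explicit the well-founded order of Definition \ref{andrew} behind the induction at atomic formulas and work out the cases the paper dismisses with ``the other cases follow easily.''
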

\begin{proof}
a) By induction on $\varphi$: Let $\varphi$ be $\Sigma\in\Gamma$, then by Definition \ref{names} $D=\{q' :\exists\langle\pi, r\rangle\in \Gamma\text{ such that }q'\leq r, q'\Vdash^*\pi=\Sigma\}$ is dense below $p$. Then for all $q\leq p$, $D$ is also dense below $q$ and therefore $q\Vdash^*\varphi$. The other cases follow easily.

b) By induction on $\varphi$. Let $\varphi$ be $\Sigma\in\Gamma$ and $\{ q\,\vert\, q\Vdash^*\Sigma\in\Gamma\}$ is dense below $p$. From Definition \ref{forcing} it follows that $\{ q\,\vert\, \{s: \exists\langle\pi, r\rangle\in\Gamma\text{ such that }s\leq r, s\Vdash^*\pi=\Sigma\}\text{ is dense below }q\}$ is dense below $p$ and from a well-known fact it follows that $D=\{s: \exists\langle\pi, r\rangle\in\Gamma\text{ such that }s\leq r, s\Vdash^*\pi=\Sigma\}$ is dense below $p$. Again by Definition \ref{forcing} we get as desired $p\Vdash^* \Sigma\in\Gamma$.

The other cases follow easily; for the case of negation we will use the fact that if $\{p\,\vert\, q\Vdash^*\neg\varphi\}$ is dense below $p$ then $\forall q\leq p (\neg q\Vdash^*\varphi)$, using a).

c) follows directly from b).

\end{proof}
Now, the proofs for the Truth Lemma and $\Vdash^* = \Vdash$ follow similarly to the proofs in set-forcing (note that a name $\Sigma\in\mathcal{N}$ can also be a set-name and therefore we don't need to mention the cases for set-names explicitly):
\begin{lem}[Truth Lemma]\label{truthlemmaset}
If $G$ is $P$-generic then 
\begin{equation*}
(M,\mathcal{C})[G]\models\varphi(\Sigma^G_1,\ldots,\Sigma^G_m)\Leftrightarrow\exists p\in G\,(p\Vdash^*\varphi(\Sigma_1,\ldots,\Sigma_m)).
\end{equation*}
\end{lem}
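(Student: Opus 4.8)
The plan is to prove the equivalence by induction on the complexity of $\varphi$, following the classical set-forcing template but with the atomic cases handled by an inner induction along the well-founded order of Definition \ref{andrew}. Throughout I would lean on the three properties of $\Vdash^*$ collected in Lemma \ref{auxlemma}: monotonicity (a), the converse density principle (b), and the splitting property (c). A point special to the class-theoretic setting, which I would stress at the outset, is that every density argument produces a candidate dense set $D$, and genericity only meets dense sets that belong to $\mathcal{C}$; here this is guaranteed because $\Vdash^*$ is definable over $(M,\mathcal{C})$ by Lemma \ref{definabilityset} (hence Lemma \ref{definability}), so that each such $D$ is a class by Class-Comprehension. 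I would also record once the standard fact that if $D\in\mathcal{C}$ is dense below $p$ and $p\in G$ then $G\cap D\neq\emptyset$ (apply genericity to $D\cup\{q:q\perp p\}$ and use that the witness, lying in $G$, is compatible with $p$, hence in $D$).

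For the atomic membership formula $\Sigma\in\Gamma$ I would argue both directions. For $(\Leftarrow)$, if $p\in G$ and $p\Vdash^*\Sigma\in\Gamma$, then the set $\{q:\exists\langle\pi,r\rangle\in\Gamma\,(q\leq r\wedge q\Vdash^*\pi=\Sigma)\}$ is dense below $p$ and lies in $\mathcal{C}$, so it meets $G$; picking $q\in G$ in it gives $\langle\pi,r\rangle\in\Gamma$ with $r\in G$ (upward closure) and, by the inner induction hypothesis applied to the $<$-smaller formula $\pi=\Sigma$, $\pi^G=\Sigma^G$, whence $\Sigma^G=\pi^G\in\Gamma^G$ by Definition \ref{interpretation}. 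For $(\Rightarrow)$, if $\Sigma^G\in\Gamma^G$ then Definition \ref{interpretation} yields $\langle\pi,r\rangle\in\Gamma$ with $r\in G$ and $\pi^G=\Sigma^G$; the inner induction hypothesis gives $s\in G$ with $s\Vdash^*\pi=\Sigma$, and choosing $p\in G$ below both $s$ and $r$ makes every $q\leq p$ a witness (using monotonicity (a)), so the displayed set is dense below $p$ and $p\Vdash^*\Sigma\in\Gamma$.

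The atomic equality $\Sigma=\Gamma$ is the delicate case. The direction $(\Leftarrow)$ reduces, via extensionality of the interpretations and the two membership formulas occurring in clauses (2), (4), (6) of Definition \ref{forcing}, to the membership Truth Lemma at strictly $<$-smaller formulas, so it follows from the previous paragraph. For $(\Rightarrow)$ I would assume $\Sigma^G=\Gamma^G$ and suppose, toward a contradiction, that no condition in $G$ forces $\Sigma=\Gamma$. By Lemma \ref{auxlemma}(c) the set of conditions that either force $\Sigma=\Gamma$ or decide some instance $\pi\in\Sigma$, $\pi\in\Gamma$ (with $\langle\pi,r\rangle\in\Sigma\cup\Gamma$ and $p\leq r$) with opposite truth values is dense and in $\mathcal{C}$; a witness $p\in G$ cannot force $\Sigma=\Gamma$ by assumption, so it decides some such instance oppositely. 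Applying the membership Truth Lemma (both directions) at the $<$-smaller formulas $\pi\in\Sigma$ and $\pi\in\Gamma$ then places $\pi^G$ in exactly one of $\Sigma^G$, $\Gamma^G$, contradicting $\Sigma^G=\Gamma^G$. The care here is that the negations are unfolded purely through Lemma \ref{auxlemma}(a),(c) and the definitions, never invoking a Truth Lemma for a compound formula, so no circularity arises.

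Finally the connectives and quantifiers. The case $\wedge$ is immediate from clause (7) and the induction hypothesis. For $\neg\psi$, direction $(\Leftarrow)$ follows by monotonicity and the compatibility of $G$, and $(\Rightarrow)$ by meeting the dense (and, by definability, $\mathcal{C}$-) set $\{p:p\Vdash^*\psi\ \text{or}\ p\Vdash^*\neg\psi\}$ from Lemma \ref{auxlemma}(c) and discarding the first alternative using the induction hypothesis for $\psi$. For $\forall x$ and $\forall X$ the crucial remark is that by Definition \ref{extension} every element of $M[G]$ is $\sigma^G$ for a set-name $\sigma$ and every element of $\mathcal{C}[G]$ is $\Sigma^G$ for a class-name $\Sigma$, so the object quantifiers in the extension range exactly over the interpretations appearing in clauses (9) and (10); $(\Leftarrow)$ is then direct from the induction hypothesis, and $(\Rightarrow)$ mirrors the negation case, meeting $\{p:p\Vdash^*\forall x\,\psi\ \text{or}\ \exists\sigma\,(p\Vdash^*\neg\psi(\sigma))\}$ and ruling out the second alternative via the induction hypothesis for the strictly simpler $\psi(\sigma)$. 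I expect the main obstacle to be exactly the bookkeeping that keeps the induction well-founded: ensuring that at the equality, negation, and universal steps one appeals only to the Truth Lemma for strictly simpler formulas (or $<$-smaller atomic ones), that all negations are resolved syntactically through Lemma \ref{auxlemma}, and that every dense set invoked is genuinely a member of $\mathcal{C}$ so that genericity applies.
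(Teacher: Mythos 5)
Your proof is correct and follows essentially the same route as the paper's: induction on $\varphi$, with the atomic membership case handled through the dense sets built into Definition \ref{forcing}, the equality and universal-quantifier cases through dense sets of conditions that either force the formula or force a counterexample, and all negations resolved via Lemma \ref{auxlemma}. If anything, you are more scrupulous than the paper at two points it leaves implicit: that every dense set invoked actually lies in $\mathcal{C}$ (via the Definability Lemma and Class-Comprehension), and that the equality case unfolds $\neg(\pi\in\Sigma\leftrightarrow\pi\in\Gamma)$ through Lemma \ref{auxlemma} rather than applying the induction hypothesis to a compound formula.
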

\begin{proof}
By induction on $\varphi$.
\begin{itemize}
\item[$\Sigma\in\Gamma$.] ``$\rightarrow$'' Assume $\Sigma^G\in \Gamma^G$ then choose a $\langle\pi, r\rangle\in\Gamma$ such that $\Sigma^G=\pi^G$ and $r\in G$. By induction there is a $p\in G$ with $p\leq r$ and $p\Vdash^*\pi=\Sigma$. Then for all $q\leq p$, $q\Vdash^*\pi=\Sigma$ and by Definition \ref{names} $p\Vdash^*\Sigma\in\Gamma$.

``$\leftarrow$'': Assume $\exists p\in G (p\Vdash^*\Sigma\in\Gamma)$. Then $\{q :\exists\langle\pi, r\rangle\in \tau\text{ such that }q\leq r, q\Vdash^*\sigma=\pi\}=D$ is dense below $p$ and so by genericity $G\cap D\neq\emptyset$. So there is a $q\in G$, $q\leq p$ such that $\exists \langle\pi, r\rangle\in\Gamma$ with $q\leq r$, $q\Vdash^*\pi=\Sigma$. By induction $\pi^{G}=\Sigma^{G}$ and as $r\geq q$, $r\in G$  and therefore $\pi^{G}\in\Gamma^{G}$. So $\Sigma^{G}\in\Gamma^{G}$.

\item[$\Sigma=\Gamma$.] ``$\rightarrow$'' Assume $\sigma^{G}=\Gamma^{G}$. Then for all $\langle\pi, r\rangle\in\Sigma\cup\Gamma$ with $r\in G$ it holds that $\pi^{G}\in\Sigma^{G}\leftrightarrow\pi^{G}\in\Gamma^{G}$. Let $D=\{p\,\vert\,\text{either }p\Vdash^*\Sigma=\Gamma\text{ or for some }\langle\pi, r\rangle\in\Sigma\cup\Gamma, p\Vdash^*\neg(\pi\in\Sigma\leftrightarrow\pi\in\Gamma)\}$. Then $D$ is dense: By contradiction, let $q\in P$ and assume that there is no $p\leq q$ such that $p\in D$. 
But if there is no $p\leq q$ such that for some $\langle\pi, r\rangle\in\Sigma\cup\Gamma, p\Vdash^*\neg(\pi\in\Sigma\leftrightarrow\pi\in\Gamma)\}$ then by Lemma \ref{auxlemma} $q\Vdash^*(\pi\in\Sigma\leftrightarrow\pi\in\Gamma)$ for all $\langle\pi, r\rangle\in\Sigma\cup\Gamma$ and therefore $q\Vdash^*\Sigma=\Gamma$. So there is a $p\leq q$ such that $p\in D$. Since the filter $G$ is generic, there is a $p\in G\cap D$. If $p\Vdash^*\neg(\pi\in\Sigma\leftrightarrow\pi\in\Gamma)\}$ for some $\langle\pi, r\rangle\in\Sigma\cup\Gamma$ then by induction $\neg(\pi^{G}\in\Sigma^{G}\leftrightarrow\pi^{G}\in\Gamma^{G})$ for some $\langle\pi, r\rangle\in\Sigma\cup\Gamma$. But this is a contradiction to $\Sigma^{G}=\Gamma^{G}$ and so $P\Vdash^*\Sigma=\Gamma$.

``$\leftarrow$'' Assume that there is $p\in G$ $(p\Vdash^*\Sigma=\Gamma)$. By Definition \ref{names} it follows that for all $\langle\pi, r\rangle\in\Sigma\cup\Gamma$ $P\Vdash^*(\pi\in\Sigma\leftrightarrow\pi\in\Gamma)$. Then by induction $\pi^{G}\in\Sigma^{G}\leftrightarrow\pi^{G}\in\Gamma^{G}$ for all $\langle\pi, r\rangle\in\Sigma\cup\Gamma$. So $\Sigma^{G}=\Gamma^{G}$.

\item[$\varphi\wedge\psi$] ``$\rightarrow$'' Assume that $(M, \mathcal{C})[G]\models\varphi\wedge\psi$ iff $(M, \mathcal{C})[G]\models\varphi$ and $(M, \mathcal{C})[G]\models\psi$. Then by induction $\exists p\in G$ $P\Vdash^*\varphi$ and $\exists q\in G$, $q\Vdash^*\psi$ and we know that $\exists r\in G (r\leq p\text{ and }r\leq q)$ such that $r\Vdash^*\varphi$ and $r\Vdash^*\psi$ and so by Definition \ref{names} $r\Vdash^*\varphi\wedge\psi$.

``$\leftarrow$''  Assume $\exists p\in G, p\Vdash^*\varphi\wedge\psi$, then $p\Vdash^*\varphi$ and $p\Vdash^*\psi$. So $(M, \mathcal{C})[G]\models\varphi$ and $(M, \mathcal{C})[G]\models\psi$ and therefore $(M, \mathcal{C})[G]\models\varphi\wedge\psi$.

\item[$\neg\varphi$] ``$\rightarrow$'' Assume that $(M, \mathcal{C})[G]\models\neg\varphi$. $D=\{ p\,\vert\, p\Vdash^*\varphi$ or $p\Vdash^*\neg\varphi\}$ is dense (using Lemma \ref{auxlemma} and Definition \ref{names}). Therefore there is a $p\in G\cap D$ and by induction $p\Vdash^*\neg\varphi$.

``$\leftarrow$'' Assume that there is $p\in G$ such that $p\Vdash^*\neg\varphi$. If $(M, \mathcal{C})\models \varphi$ then by induction hypothesis there is a $q\in G$ such that $q\Vdash^*\varphi$. But then also $r\Vdash^*\varphi$ for some $r\leq p, q$ and this is a contradiction because of Definition \ref{names}. So $(M, \mathcal{C})\models\neg\varphi$.

\item[$\forall X\varphi$] ``$\rightarrow$'' Assume that $(M, \mathcal{C})[G]\models\forall X\varphi$. Following the lines of the ``$\rightarrow$''-part of the proof for $\Sigma=\Gamma$, there is a dense $D=\{p\,\vert\,\text{either }p\Vdash^*\forall X\varphi\text{ or for some }\sigma, p\Vdash^*\neg\varphi(\sigma)\}$. By induction we show that the second case is not possible and so it follows that $p\Vdash^*\forall X\varphi$.

``$\leftarrow$'' By induction.
\end{itemize}
\end{proof}
\begin{lem}
$\Vdash^*=\Vdash$
\end{lem}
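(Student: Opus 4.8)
The goal is to prove that the auxiliary relation $\Vdash^*$ coincides with the intended forcing relation $\Vdash$. Recall that $p \Vdash \varphi$ is defined semantically: it holds iff for every $P$-generic $G$ with $p \in G$, we have $(M,\mathcal{C})[G] \models \varphi$. The relation $\Vdash^*$ was defined syntactically by recursion. We have already established (Lemma~\ref{truthlemmaset}) the Truth Lemma for $\Vdash^*$, and we've shown $\Vdash^*$ satisfies the key auxiliary properties in Lemma~\ref{auxlemma}.

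The plan is to prove both inclusions using the Truth Lemma for $\Vdash^*$ together with the genericity assumption that every condition lies in some generic filter (guaranteed by the countability of $(M,\mathcal{C})$, as noted after the definition of genericity). First I would show $\Vdash^* \,\subseteq\, \Vdash$: suppose $p \Vdash^* \varphi$. Let $G$ be any $P$-generic filter with $p \in G$. Then $\exists p' \in G$ with $p' \Vdash^* \varphi$ (namely $p' = p$), so by the Truth Lemma (Lemma~\ref{truthlemmaset}) we get $(M,\mathcal{C})[G] \models \varphi$. Since $G$ was an arbitrary generic containing $p$, this is exactly the definition of $p \Vdash \varphi$.

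For the reverse inclusion $\Vdash \,\subseteq\, \Vdash^*$, suppose $p \Vdash \varphi$ but, toward a contradiction, $\neg\, p \Vdash^* \varphi$. By Lemma~\ref{auxlemma}(c) there is $q \leq p$ with $q \Vdash^* \neg\varphi$. Using the countability of the model, choose a $P$-generic $G$ with $q \in G$; since $G$ is a filter and $q \leq p$ with $G$ upwards closed, we have $p \in G$ as well. By the Truth Lemma applied to $q \Vdash^* \neg\varphi$ we obtain $(M,\mathcal{C})[G] \models \neg\varphi$. On the other hand, $p \Vdash \varphi$ and $p \in G$ give $(M,\mathcal{C})[G] \models \varphi$, a contradiction. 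Hence $p \Vdash^* \varphi$.

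The main obstacle is not conceptual depth but making sure the hypotheses are correctly invoked: the argument depends essentially on the assumption (stated after the genericity definition) that every $p \in P$ belongs to some generic $G$, which in turn relies on the countability of $(M,\mathcal{C})$; without this, the semantic relation $\Vdash$ could be vacuously satisfied and the equivalence would break. I would therefore make the use of countability explicit at each point where a generic filter is extracted. The only other thing to verify is that Lemma~\ref{auxlemma}(c) and the Truth Lemma have genuinely been proved for \emph{all} formulas $\varphi$ (both first- and second-order quantifier cases), so that the present lemma holds in full generality rather than merely for atomic formulas; granting the earlier lemmas as stated, the proof is then a short and routine application of them.
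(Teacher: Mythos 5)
Your proof is correct and follows essentially the same route as the paper: the forward inclusion is an immediate application of the Truth Lemma, and the converse uses Lemma~\ref{auxlemma}(c) together with the standing countability assumption guaranteeing a generic through any condition. Your write-up is simply a more detailed rendering (phrased as a contradiction rather than a contraposition) of the paper's own argument.
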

\begin{proof} 
$p\Vdash^*\varphi(\sigma_1, \ldots, \sigma_n)\rightarrow p\Vdash\varphi(\sigma_1, \ldots, \sigma_n)$ follows directly from the Truth Lemma.
For the converse we use Lemma \ref{auxlemma} c) and note that we assumed the existence of generics. Then from $\neg p\Vdash^*\varphi(\sigma_1, \ldots, \sigma_n)$ it follows that for some $q\leq p$, $q\Vdash^*\neg \varphi(\sigma_1, \ldots, \sigma_n)$ and so $\neg p\vdash \varphi(\sigma_1, \ldots, \sigma_n)$.
\end{proof}

\section{The Extension fulfills the Axioms}




We have shown that in MK we can prove the Definability Lemma without restricting the forcing notion as we have to do when working with $A$-definable class-forcing in ZFC. Unfortunately we do not have the same advantage when proving the preservation of the axioms. For example, when proving the Replacement Axiom we have to show that the range of a set under a class function is still a set and this does not hold in general for class-forcings. In \cite{sybook} two properties of forcing notions are introduced, namely pretameness and tameness. Pretameness is needed to prove the Definability Lemma and show that all axioms except Power Set are preserved. For the Power Set Axiom this restriction needs to be strengthened to tameness. Let us give the definitions in the MK context:
\begin{defi}[Pretameness]
$D\subseteq P$ is predense $\leq$ $p\in P$ if every $q\leq p$ is compatible with an element of $D$.\\
$P$ is pretame if and only if whenever $\langle D_i\,\vert\, i\in a\rangle$ is a sequence of dense classes in $\mathcal{M}$, $a\in M$ and $p\in P$ then there exists a $q\leq p$ and $\langle d_i\,\vert\, i\in a\rangle \in M$ such that $d_i\subseteq D_i$ and $d_i$ is predense $\leq q$ for each $i$.
\end{defi}
\begin{defi}
$q\in P$ meets $D\subseteq P$ if $q$ extends an element in $D$.

A predense $\leq p$ partition is a pair $(D_0,D_1)$ such that $D_0\cup D_1$ is predense $\leq p$ and $p_0\in D_0, p_1\in D_1\rightarrow p_0, p_1$ are incompatible. Suppose $\langle (D_0^i, D_1^i)\,\vert\, i\in a\rangle$, $\langle (E^i_0, E^i_1)\,\vert\, i\in a\rangle$ are sequences of predense $\leq p$ partitions. We say that they are equivalent $\leq p$ if for each $i\in a$, $\{ q \,\vert\, q\text{ meets }D^i_0\leftrightarrow q\text{ meets }E^i_0\}$ is dense $\leq p$. When $p=1^P$ we omit $\leq p$.
\end{defi}

To each sequence of predense $\leq p$ partitions $\vec{D}=\langle(D^i_0, D^i_1)\vert i\in a\rangle\in M$ and $G$ is $P$-generic over $\langle M,\mathcal{C}\rangle$, $p\in G$ we can associate the function 
\begin{equation*}
f^G_{\vec{D}}:a\to 2
\end{equation*}
defined by $f(i)=0\leftrightarrow G\cap D_0^i\neq\emptyset$. Then two such sequences are equivalent $\leq p$ exactly if their associated functions are equal, for each choice of $G$.

\begin{defi}[Tameness]
$P$ is tame iff $P$ is pretame and for each $a\in M$ and $p\in P$ there is $q\leq p$ and $\alpha\in ORD(M)$ such that whenever $\vec{D}=\langle(D^i_0, D^i_1)\vert i\in a\rangle\in M$ is a sequence of predense $\leq q$ partitions, $\{r\,\vert\,\vec{D}$ is equivalent $\leq r$ to some $\vec{E}=\langle (E^i_0, E^i_1)\,\vert\, i\in a\rangle$ in $V^M_{\alpha}\}$ is dense below $q$.
\end{defi}

\begin{theo}\label{axioms}
Let $(M,\mathcal{C})$ be a model of MK. Then, if $G$ is $P$-generic over $(M,\mathcal{C})$ and $P$ is tame then $(M,\mathcal{C})[G]$ is a model of MK.
\end{theo}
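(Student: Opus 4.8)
The plan is to verify each of the MK axioms of Section~\ref{axioms} in the structure $(M,\mathcal{C})[G]$, treating the set axioms and the class axioms in turn and isolating the two places where the hypotheses on $P$ are genuinely used. Throughout, the Definability Lemma (Lemma~\ref{definability}) and the Truth Lemma (Lemma~\ref{truthlemma}) are the main tools, while Lemma~\ref{basics} supplies the canonical names used in the explicit name constructions.

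First I would dispatch the ``combinatorial'' axioms by exhibiting names. Extensionality for sets, Extensionality for classes, and Foundation follow from the fact that $M[G]$ is transitive and the real membership relation is well-founded (Lemma~\ref{basics}(d)): a class of $(M,\mathcal{C})[G]$ is a genuine subcollection of the well-founded $M[G]$, so it has an $\in$-minimal element, and classes with the same members are literally equal. For Pairing, given set-names $\sigma,\tau$ one takes $\{\langle\sigma,1^P\rangle,\langle\tau,1^P\rangle\}$; Union is witnessed by the obvious name collecting the pairs occurring inside the members of a given name; and Infinity is witnessed by $\check{\omega}$ via Lemma~\ref{basics}(a). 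Each of these is a set-name of bounded rank, so the construction stays inside $M$ and no restriction on $P$ is invoked.

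The central class axiom is Class-Comprehension, where the payoff of the Definability Lemma appears. Given a second-order formula $\varphi(x,X_1,\ldots,X_n)$ and class parameters $\Sigma_1^G,\ldots,\Sigma_n^G$, I would set
$$\Sigma=\{\langle\sigma,p\rangle : \sigma\in\mathcal{N}^s,\ p\in P,\ p\Vdash\varphi(\sigma,\Sigma_1,\ldots,\Sigma_n)\}.$$
By the Definability Lemma the relation $p\Vdash\varphi$ is definable over $(M,\mathcal{C})$, so $\Sigma$ is a class of $\mathcal{C}$ by Class-Comprehension in the ground model; since $\varphi$ may quantify over classes, this is precisely where full MK-Comprehension in $M$ (as opposed to the weaker GB scheme) is needed. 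The Truth Lemma then gives $\Sigma^G=\{x:(M,\mathcal{C})[G]\models\varphi(x,\Sigma_1^G,\ldots,\Sigma_n^G)\}$, so the comprehension class lies in $\mathcal{C}[G]$. Global Choice follows from Class-Comprehension together with the ground-model global well-order: each set-name is an element of $M$, so $\mathcal{N}^s\subseteq M$ is well-ordered by a class of $\mathcal{C}$ since $(M,\mathcal{C})\models$ Global Choice, and mapping each $x\in M[G]$ to its least name induces a definable global well-order of $M[G]$, which by the Class-Comprehension we have just established is a class of $\mathcal{C}[G]$.

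The two axioms that genuinely require the hypotheses on $P$ are Replacement and Power Set, and I expect Power Set to be the main obstacle. For Replacement, given a class function $F$ with name $\Phi$ and a set $a=\alpha^G$, I would use pretameness: for each index and each condition one finds, via the forcing relation, a value-name deciding $F(z)$, and pretameness uniformizes these choices into a single sequence $\langle d_i\,\vert\,i\in a\rangle\in M$ of predense sets below some $q\leq p$ in $G$, bounding the ranks of the required value-names; collecting them yields a set-name for the range. For Power Set, given $\sigma\in\mathcal{N}^s$ with $x=\sigma^G$, every subset of $x$ in $M[G]$ is determined by which members $\pi^G$ of $x$ it contains, i.e.\ by a sequence of predense $\leq q$ partitions indexed by the pairs of $\sigma$; tameness furnishes an ordinal $\alpha$ and a dense set of conditions below which each such partition is equivalent to one in $V^M_\alpha$, so that all subsets of $x$ can be named by elements of a single set of names of rank below $\alpha$, bounding the power-set name inside $M$. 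The delicate point, and the step I would treat most carefully, is checking that the partition-equivalence machinery of tameness really captures \emph{all} subsets of $x$ in the two-sorted setting, where membership facts may be forced through conditions interacting with class-names as well as set-names.
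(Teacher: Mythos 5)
Your proposal is correct and takes essentially the same route as the paper: transitivity of $M[G]$ for Extensionality and Foundation, explicit names for Pairing, Union and Infinity, the Definability and Truth Lemmas for Class-Comprehension, least names under the ground-model well-order for Global Choice, pretameness for Replacement, and tameness for Power Set. The only difference is one of detail: where you sketch the partition-equivalence argument for Power Set (and flag the two-sorted subtlety, which is harmless since subsets of a set in $M[G]$ always have set-names), the paper simply defers that step to \cite{sybook}.
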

\begin{proof}
Extensionality and Foundation follow because $M[G]$ is transitive (see Lemma \ref{basics} d) ; axiom 2 and 3 from Definitions \ref{names} and \ref{interpretation}. For Pairing, let $\sigma_1^G, \sigma_2^G$ be such that $\sigma_1$, $\sigma_2 \in \mathcal{N}^s$. Then the interpretation of the name $\sigma=\{\langle \sigma_1, 1^P\rangle, \langle\sigma_2, 1^P\rangle\}$ in the extension gives the desired $\sigma^G=\{\sigma_1^G, \sigma_2^G\}$. Infinity follows because $\omega$ exists in $(M,\mathcal{C})$ and the notion of $\omega$ is absolute to any model, $\omega\in (M,\mathcal{C})[G]$. Union follows as in the set-forcing case.

Replacement: This follows as in \cite{sybook} from the property of pretameness and we give the proof to make clear where the property of pretameness is needed: Suppose that $F:\sigma^G\to M[G]$. Then for each $\sigma_0$ of rank $<$ rank$\,\sigma$ the class $D(\sigma_0)=\{p\,\vert\,\text{for some }\tau, q\Vdash\sigma_0\in\sigma\to F(\sigma_0)=\tau\}$ is dense below $p$, for some $p\in G$ which forces that $F$ is a total function on $\sigma$. We now use pretameness to ``shrink'' this class to a set: so for each $q\leq p$ there is an $r\leq q$ and $\alpha\in Ord(M)$ such that $D_{\alpha}(\sigma_0)=\{ s\,\vert\, s\in V^{M}_{\alpha}\text{ and for some }\tau\text{ of rank }<\alpha, s\Vdash\sigma_0\in\sigma\to F(\sigma_0)=\tau\}$ is predense $\leq$ $r$ for each $\sigma_0$ of rank $<$ rank $\sigma$. Then it follows by genericity that there is a $q\in G$ and $\alpha\in Ord(M)$ such that $q\leq p$ and $D_{\alpha}(\sigma_0)$ is predense $\leq q$ for each $\sigma_0$ of rank $<$ rank$\,\sigma$. So let $\pi=\{\langle\tau, r\rangle\,\vert\text{ rank}\,\tau<\alpha, r\in V^{M}_{\alpha}, r\Vdash\tau\in\text{ ran}(F)\}$ and then it follows that ran$(F)=\pi^G\in M[G]$.

Power Set: This follows from tameness as shown in \cite{sybook}.

Class-Comprehension: 
 Let $\Gamma=\{\langle\sigma,p\rangle\in \mathcal{N}^s\times P\,\vert\, p\Vdash\varphi(\sigma, \Sigma_1,\ldots,\Sigma_n)\}$. Because of the Definability Lemma, we know that $\Gamma\in\mathcal{N}$. By Definition \ref{names} and \ref{interpretation}, $\Gamma^G=\{\sigma^G\,\vert\,\exists p\in G(\langle\sigma, p\rangle\in\Gamma)\}$ and we need to check that this equals the desired $Y=\{x\,\vert\,(\varphi(x, \Sigma_1^G,\ldots,\Sigma_n^G))^{(M,\mathcal{C})[G]}\}$. So let $\sigma^G\in\Gamma^G$. Then by the definition of $\Gamma^G$ we know that $p\Vdash\varphi(\sigma,\Sigma_1,\ldots,\Sigma_n)$ and because of the Truth Lemma it follows that $(M,\mathcal{C})[G]\models\varphi(\sigma^G,\Sigma_1^G,\ldots,\Sigma_n^G)$. For the converse, let $x\in Y$. By the Truth Lemma, $\exists p\in G(p\Vdash\varphi(\pi, \Sigma_1,\ldots,\Sigma_n)$, where $\pi$ is a name for $x$. By definition of $\Gamma$, $\langle\pi, p\rangle\in\Gamma$.

 
Global Choice: Let $<_M$ denote the well-order of $M$ and let $\sigma_x, \sigma_y$ be the least names for some $x,y\in M[G]$. As the names are elements of $M$, we may assume that $\sigma_x <_M \sigma_y$. So we define the relation $<_G$ in $M[G]$ using $M$ and $<_M$ as parameters, so that $x <_G y$ iff $\sigma_x <_M \sigma_y$ for the corresponding least names of $x$ and $y$. Let $R=\{(x,y)\,\vert\, x,y\in M[G]\text{ and }x <_G y\}$. Then by Class-Comprehension the class $R$ exists.
\end{proof}

\cite{sybook} gives us a simple sufficient condition for tameness that translates directly into the context of MK: 

\begin{defi}
For regular, uncountable $\kappa>\omega$, $P$ is $\kappa$-distributive if whenever $p\in P$ and $\langle D_i\,\vert\, i<\beta\rangle$ are dense classes, $\beta<\kappa$ then there is a $q\leq p$ meeting each $D_i$ ($p$ meets $D$ if $p\leq q\in D$ for some $q$).

$P$ is tame below $\kappa$ if the tameness conditions hold for $P$ with the added restriction that $Card(a) < \kappa$.
\end{defi}
\begin{lem}
If $P$ is $\kappa$-distributive then $P$ is tame below $\kappa$.
\end{lem}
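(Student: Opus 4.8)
The plan is to show that $\kappa$-distributivity implies both halves of the definition of "tame below $\kappa$", namely pretameness (with the cardinality restriction) and the additional tameness clause about partition-equivalence. I would split the argument accordingly and handle pretameness first, since it is the conceptually simpler half and the tameness clause builds on the same ideas.

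\textbf{Pretameness below $\kappa$.} First I would take a sequence $\langle D_i \mid i\in a\rangle$ of dense classes with $Card(a)<\kappa$ and a condition $p\in P$, and I would apply $\kappa$-distributivity directly. Since $Card(a)<\kappa$, the hypothesis of $\kappa$-distributivity is met, so there is a $q\leq p$ meeting each $D_i$; that is, for each $i$ there is some $q_i\in D_i$ with $q\leq q_i$. The point is that this single $q$ is strong enough that the sets $d_i:=\{q_i\}$ (or, to be safe, a set of witnesses chosen below $q$) are predense $\leq q$: every $q'\leq q$ is compatible with $q_i$ because $q'\leq q\leq q_i$. I would then note that the sequence $\langle d_i\mid i\in a\rangle$ lies in $M$ because $a\in M$, each $d_i$ is a set, and the choice of witnesses can be made using the global well-order $<_M$ of $M$ (Global Choice). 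Each $d_i\subseteq D_i$ and is predense $\leq q$ as required, giving pretameness below $\kappa$.

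\textbf{The tameness clause.} Here, given $a\in M$ with $Card(a)<\kappa$ and $p\in P$, I must produce $q\leq p$ and an ordinal $\alpha$ so that every sequence $\vec{D}$ of predense $\leq q$ partitions is, on a dense set below $q$, equivalent to some sequence $\vec{E}$ of partitions lying in $V^M_\alpha$. The idea is that $\kappa$-distributivity lets me reduce each partition, below a suitable condition, to a decision recorded by a single condition, so that the equivalence-type of $\vec{D}$ is captured by a bounded amount of information. Concretely, for a given $\vec{D}$ I would use the previous paragraph to find, below any $r\leq q$, a stronger condition that meets $D^i_0$ or $D^i_1$ for every $i$ simultaneously (here $Card(a)<\kappa$ is what makes $\kappa$-distributivity applicable to the family of dense sets $D^i_0\cup D^i_1$); this stronger condition decides, for each $i$, which side of the partition it meets, and the induced function $a\to 2$ is exactly the data $f^G_{\vec{D}}$ that determines the equivalence class $\leq r$. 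I would then take $\alpha$ large enough that $V^M_\alpha$ contains a canonical partition realizing each possible function $a\to 2$; since there are at most $2^{Card(a)}$ such functions and $a\in M$, such an $\alpha\in ORD(M)$ exists by Power Set and Replacement in $M$. The dense set of conditions $r$ below which $\vec{D}$ is equivalent to one of these canonical partitions in $V^M_\alpha$ is exactly what the definition of tameness demands.

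\textbf{The main obstacle} I expect is the bookkeeping in the tameness clause: one must verify that the condition obtained from $\kappa$-distributivity genuinely forces agreement of the associated functions $f^G_{\vec{D}}$ and $f^G_{\vec{E}}$ for the canonical $\vec{E}\in V^M_\alpha$, so that $\vec{D}$ and $\vec{E}$ are equivalent $\leq r$ in the precise sense (density of $\{q'\mid q'\text{ meets }D^i_0\leftrightarrow q'\text{ meets }E^i_0\}$), and that the ordinal $\alpha$ can be chosen uniformly in $\vec{D}$ rather than depending on the particular sequence. The uniformity is guaranteed because $\alpha$ depends only on $a$ through the cardinality bound on the set of functions $a\to 2$, and the canonical partitions realizing these functions can be fixed in advance in $V^M_\alpha$; the density argument is then a routine application of $\kappa$-distributivity together with part (a) of the preceding discussion. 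Care is also needed that all the auxiliary sets one forms indeed live in $M$, which follows from $a\in M$, Global Choice, and the comprehension available in the ZFC model $M$.
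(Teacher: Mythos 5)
Your proof is correct and is essentially the argument the paper itself leaves implicit: the paper's entire proof is ``Analogous to set-forcing'' with a citation to Friedman's book, and your two-part argument --- singleton sets of witnesses $d_i=\{q_i\}$ for pretameness, then deciding each partition below any $r\leq q$ via distributivity and matching the resulting function $f\colon a\to 2$ against trivial rank-bounded partitions (e.g.\ $E^i_{f(i)}=\{1^P\}$, $E^i_{1-f(i)}=\emptyset$) --- is exactly that standard argument. The only step to tighten is that $D^i_0\cup D^i_1$ is merely predense $\leq q$, not dense in $P$, so before invoking $\kappa$-distributivity you should pass to the dense class of conditions that either meet $D^i_0\cup D^i_1$ or are incompatible with $q$; this is routine and does not affect the structure of your proof.
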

\begin{proof}
Analogous to set-forcing \footnote{See \cite{sybook}, page 37.}.
\end{proof}

\section{Laver's Theorem}

In the following we will give an example which shows that a fundamental theorem that hold for set-forcing can be violated by tame class-forcings.

Laver's Theorem, as published in \cite{Lav}, shows that for a set-generic extension $V\subseteq V[G]$, $V\models ZFC$ with the forcing notion $P\in V$ and $G$ is $P$-generic over $V$, $V$ is definable in $V[G]$ from parameter $V_{\delta+1}$ (of $V$) and $\delta=\vert\,P\,\vert^+$ in $V[G]$. This result makes use of the fact that every such forcing extension has the approximation and cover properties as defined in \cite{ham2} and relies on certain results for such extensions. 

In general, the same does not hold for class-forcing. In fact there are class-forcings such that the ground model is not even second-order definable from set-parameters:

\begin{theo}
There is an MK-model $(M,\mathcal{C})$ and a first-order definable, tame class-forcing $\mathbb{P}$ with $G$ $\mathbb{P}$-generic over $(M,\mathcal{C})$ such that the ground model $M$ is not definable with set-parameters in the generic extension $(M,\mathcal{C})[G]$.
\end{theo}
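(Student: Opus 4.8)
The plan is to exhibit a simply definable, highly homogeneous class-forcing that destroys the approximation property \emph{cofinally} through the ordinals, so that no bounded set of information can recover $M$, and then to argue that homogeneity forces any putative set-parameter definition of $M$ to be decided far too low to be correct. First I would fix the forcing. Let $(M,\mathcal{C})$ be our countable MK-model and let $\mathbb{P}$ be the Easton-support product $\prod_{\kappa}\mathrm{Add}(\kappa,1)$ over the regular cardinals $\kappa$ of $M$, where $\mathrm{Add}(\kappa,1)$ adds a subset of $\kappa$ by conditions of size $<\kappa$. Both $\mathbb{P}$ and $\leq_{\mathbb{P}}$ are first-order definable over $M$ without parameters, so $\mathbb{P}\in\mathcal{C}$; and $\mathbb{P}$ is tame by the standard Easton-style verification (Easton forcing is the paradigmatic tame forcing of \cite{sybook}, its tail above $\kappa$ being $\kappa$-distributive so that the tameness conditions hold below every $\kappa$). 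Since $M$ is countable a generic $G$ exists; set $N=(M,\mathcal{C})[G]$, and recall $G\in\mathcal{C}[G]$ by Lemma \ref{basics}(c). The structural feature I will exploit is that at each regular $\kappa$ the generic adds a \emph{fresh} subset $A_\kappa\subseteq\kappa$: since $\kappa$ is regular, every small piece $A_\kappa\cap a$ with $a\in M$, $|a|<\kappa$, is bounded hence lies in $M$, while $A_\kappa\notin M$. Thus the $\kappa$-approximation property of $N$ over $M$ fails at every regular $\kappa$, which is exactly why Laver's proof, relying on the approximation and cover properties at a single bounded $\delta=|P|^+$ (see \cite{Lav}, \cite{ham2}), cannot go through: there is no bounding $\delta$ for a proper-class forcing.

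Next I would set up the non-definability statement by contradiction. Suppose $\varphi(x,X_1,\dots,X_n)$ is a second-order formula and $p\in M[G]$ a set with $M=\{x:N\models\varphi(x,p)\}$. Because $p$ is a set it has a set-name, and by the product structure that name mentions only conditions supported below some ordinal $\gamma$; hence $p$ already lies in the set-forcing initial segment $W:=M[G\restriction\gamma]$, and $\mathbb{P}$ factors as $\mathbb{P}\restriction\gamma$ followed by a tail $\mathbb{Q}$ that is again weakly homogeneous and class-sized over $W$. The key tool is the automorphism lemma, available here precisely because the Definability Lemma \ref{definability} provides the forcing relation for \emph{all} second-order $\varphi$ and automorphisms of $\mathbb{P}$ act on all names preserving $\Vdash$. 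Every bit-flipping automorphism of $\mathbb{Q}$ fixes $\dot p$ and every ground-model name $\check a$, so for $a\in M$ the truth value of $\varphi(\check a,\dot p)$ in $N$ is already decided by $G\restriction\gamma$, even though the class quantifiers of $\varphi$ may range over all classes of $N$ including $G$ itself. Consequently the restriction of the defined class to $M$ is computed inside $W$, and correctness forces the definition to acquire a $\gamma$-approximation character: membership of a set of ordinals is settled by bounded, ground-model-coded data below $\gamma$.

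The main obstacle, and the heart of the proof, is converting this ``low decision'' into an outright contradiction, namely showing that the would-be definition must misclassify a set added high up. Here I would invoke freshness directly: choose a regular $\kappa>\gamma$ and consider the fresh generic $A_\kappa\subseteq\kappa$. All of its small pieces lie in $M$, so the bounded approximation-style data that (by the previous paragraph) governs the definition cannot distinguish $A_\kappa$ from a genuine element of $M$; yet $A_\kappa\notin M$. Hence the definition is compelled to include a set outside $M$, contradicting $M=\{x:N\models\varphi(x,p)\}$. Carrying this out rigorously for an \emph{arbitrary} second-order $\varphi$ is the delicate point, since one must control the class quantifiers of $\varphi$ uniformly and show that weak homogeneity of the tail really does collapse the decision of $\varphi(\check a,\dot p)$ to the bounded part $G\restriction\gamma$; this is the class-forcing analogue of the failure of the approximation and cover properties and is where all the work lies.

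As a cleaner fallback that sidesteps the rigidity of $N$ (which blocks any internal-automorphism argument, transitive models of ZFC being rigid), I would use the countability of $(M,\mathcal{C})$ to secure non-definability by genericity. For each pair $(\varphi,\dot p)$ consider the class $D_{\varphi,\dot p}$ of conditions forcing that $\{x:\varphi(x,\dot p)\}$ disagrees with the ground model at some witness; the freshness phenomenon above shows each such $D_{\varphi,\dot p}$ is dense, so a generic $G$ meeting them all yields an extension in which no second-order formula with a set parameter defines $M$. Either route reduces the theorem to the single freshness-versus-homogeneity estimate, which I expect to be the principal technical hurdle.
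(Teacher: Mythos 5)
There is a fatal gap, and it appears before the step you flag as ``the delicate point'': your construction cannot work for \emph{any} choice of contradiction argument, because the theorem is false for the instance you set up. You force with a single Easton product over an unspecified ground model $(M,\mathcal{C})$; taking the natural candidate $M=L$ (which is what one must do to make ``first-order definable'' and tameness cheap), the ground model \emph{is} definable in $M[G]$ --- by the parameter-free first-order formula ``$x$ is constructible'', since $M[G]$ is transitive with the same ordinals as $M$, so $L^{M[G]}=L^{M}=M$. This single example shows that both of your routes must break down: the homogeneity step only establishes that the truth value of $\varphi(\check a,\dot p)$ for $a\in M$ is decided by the bounded part $G\restriction\gamma$, which is perfectly compatible with a correct definition of $M$ (constructibility is decided by the trivial condition everywhere); it does \emph{not} follow that the evaluation of $\varphi$ at the fresh set $A_\kappa$ is ``governed by bounded approximation-style data'' --- the formula ``$x\in L$'' separates $A_\kappa$ from $M$ with no bounded data at all. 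For the same reason your fallback fails: the class $D_{\varphi,\dot p}$ is \emph{empty}, not dense, when $\varphi$ is the constructibility formula and $M=L$. Freshness (failure of the approximation property) explains why Laver's theorem does not apply, but it is not itself an obstruction to definability of the ground model.

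The missing idea is that the ground model itself must be made non-canonical, and this is exactly what the paper does with a two-copy product and a swapping argument that homogeneity alone cannot replace. The paper forces over $L$ with $\mathbb{P}=P\times P$, $P$ the Easton product adding one Cohen subset to each successor cardinal, and takes as the MK ground model $M=L[G_0]$ with the second copy of $P$ (generic $G_1$) as the forcing of the theorem. Given a putative definition of $L[G_0]$ in $L[G_0][G_1]$ by $\varphi(x,a_0)$, one absorbs $a_0$ into $L[G_0\restriction\alpha,G_1\restriction\alpha]$, picks $q\in G$ forcing $\varphi(\dot a,a_0)$ where $\dot a$ names the first-copy Cohen set at $\alpha^+$, and then builds $G^*$ from $G$ by \emph{swapping} the two coordinates at $\alpha^+$ above the length of $q(\alpha^+)$. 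This $G^*$ contains $q$, is generic by weak homogeneity, and yields the same extension $L[G_0^*][G_1^*]=L[G_0][G_1]$; but now $\dot a$ is interpreted as $b=G_1(\alpha^+)\notin L[G_0]$, while $q$ still forces $\varphi(b,a_0)$ --- the desired contradiction. Note that the symmetry between the two mutually generic copies is what defeats \emph{every} candidate formula, including constructibility-style definitions: $L[G_0]$ and $L[G_0^*]$ are different inner models that the extension cannot tell apart from set parameters. If you want to salvage your write-up, replace your single product by this pair structure and carry out the swap; the homogeneity and freshness observations you make can then serve as supporting lemmas, but they cannot carry the proof on their own.
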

\begin{proof}
We are starting from $L$. For every successor cardinal $\alpha$, let $P_\alpha$ be the forcing that adds one Cohen set to $\alpha$: $P_\alpha$ is the set of all functions $p$ such that
\begin{equation*}
dom(p)\subset\alpha,\quad\vert dom(p)\vert<\alpha,\quad ran(p)\subset\{0,1\}.
\end{equation*}
Let $P$ be the Easton product of the $P_\alpha$ for every successor $\alpha$: A condition $p\in P$ is a function $p\in L$ of the form $p=\langle p_\alpha : \alpha\text{ successor cardinal}\rangle\in\Pi_{\alpha\text{ succ.}} P_\alpha$ ($p$ is stronger then $q$ if and only if $p\supset q$) and $p$ has Easton support: for every inaccessible cardinal $\kappa$, $\vert\,\{\alpha<\kappa\,\vert\,p(\alpha)\neq\emptyset\}\,\vert<\kappa$. Then $P$ is the forcing which adds one Cohen set to every successor cardinal.   

Let $\mathbb{P}= P\times P=\Pi_{\alpha\text{ succ}} P_\alpha\times\Pi_{\alpha\text{ succ}} P_\alpha$ be the forcing that adds simultaneously two Cohen sets to every successor cardinal.\footnote{It follows by a standard argument that $\mathbb{P}$ is pretame (and indeed tame) over $(M,\mathcal{C})$, see \cite{sybook}.} Note that 
 $\Pi_{\alpha\text{ succ.}} P_\alpha\times\Pi_{\alpha\text{ succ.}} P_\alpha$ is isomorphic to $\Pi_{\alpha\text{ succ.}} P_{\alpha}\times P_{\alpha}$. Let $G$ be $\mathbb{P}$-generic. Then $G=\Pi_{\alpha\text{ succ.}} G_0(\alpha)\times G_1(\alpha)$ and we let $G_0=\Pi_{\alpha\text{ succ.}} G_0 (\alpha)$ and $G_1=\Pi_{\alpha\text{ succ.}} G_1 (\alpha)$ with $G_0,G_1$ $P$-generic over $L$. We consider the extension $L[G_0]\subseteq L[G_0][G_1]$ and we will show, that $L[G_0]$ is not definable in $L[G_0][G_1]$ from parameters in $L[G_0]$.
 
The reason that we cannot apply Laver's and Hamkins' results of \cite{Lav} to this extension is that it does not fulfill the $\delta$ approximation property\footnote{A pair of transitive classes $M\subseteq N$ satisfies the \emph{$\delta$ approximation property} (with $\delta\in Card^{N}$) if whenever $A\subseteq M$ is a set in $N$ and $A\cap a\in M$ for any $a\in M$ of size less than $\delta$ in $M$, then $A\in M$. For models of set theory equipped with classes, the pair $M\subseteq N$ satisfies the \emph{$\delta$ approximation property for classes} if whenever $A\subseteq M$ is a class of $N$ and $A\cap a\in M$ for any $a$ of size less than $\delta$ in $M$, then $A$ is a class of $M$.}: As the forcing adds a new set to every successor, the $\delta$ approximation property cannot hold at successor cardinals $\delta$: the added Cohen set is an element of the extension and a subset of the ground model and all of its $<\delta$ approximations are elements of the ground model but the whole set is not. 
 
 Note that the forcing is weakly homogeneous, i.e. for every $p,q\in\mathbb{P}$ there is an automorphism $\pi$ on $\mathbb{P}$ such that $\pi(p)$ is compatible with $q$. This is because every $P_{\alpha}$ is weakly homogeneous (let $\pi(p)\in P_\alpha$ such that $dom(\pi(p))=dom(p)$ and $\pi(p)(\lambda)= q(\lambda)$ if $\lambda\in dom(p)\cap dom(q)$ and $\pi(p)(\lambda)= p(\lambda)$ otherwise, then $\pi$ is order preserving and a bijection) and therefore also $P$ is weakly homogeneous (define $\pi$ componentwise 
 using the projection of $p$ to $p_\alpha$). Similar for $P\times P$. 
 
 
To show that $L[G_0]$ is not definable in $L[G_0][G_1]$ with parameters, assume to the contrary that there is a set-parameter $a_0$ such that $L[G_0]$ is definable by the second-order formula $\varphi(x, a_0)$ in $L[G_0][G_1]$ from $a_0$. Let $\alpha$ be such that $a_0\in L[G_0\!\!\upharpoonright\!\alpha, G_1\!\!\restriction\!\alpha]$. Now consider $a=G_0(\alpha^+)$, the Cohen set which is added to $\alpha^+$ in the first component of $\mathbb{P}$. $a$ is $P_{\alpha^+}$-generic over $L[G_0\!\!\restriction\!\alpha, G_1\!\!\restriction\!\alpha]$ and as $a$ is an element of $L[G_0]$ the formula $\varphi$ holds for $a$.
 So we also know that there is a condition $q\in G$ such that $q\Vdash\varphi(\dot{a}, a_0)$. 
 
Now we construct another generic $G^*=G^*_{0}\times G^*_{1}$ which produces the same extension but also an element for which $\varphi$ holds and which is not an element of $L[G_0]$. This new generic adds the same sets as $G$, but we switch $G_0$ and $G_1$ at $\alpha^+$ so that the set added by $G_1(\alpha^+)$ is now added in the new first component $G_0^*$. However we have to make sure that the new generic respects $q$ so that $\varphi$ is again forced in the extension. We achieve this by fixing the generic $G$ on the length of $q(\alpha^+)$ (we can assume that the length is the same on $G_0$ and $G_1$). 

It follows that $q\in G_{0}^{*}\times G_1^*$ and because of weakly homogeneity $G_{0}^{*}\times G_1^*$ is generic and $L[G_0][G_1]=L[G_0^*][G_1^*]$. Because of the construction of $G^*$, the formula $\varphi(x, a_0)$ holds for the set $b=G_0^*(\alpha^+)$ but $b$ is not an element of $L[G_0]$. That is a contradiction!

\end{proof}

We have seen that there are different ways of approaching class-forcing, namely on the one hand as definable from a class parameter $A$ in a ZFC model $(M, A)$ and on the other hand in the context of an MK model $(M,\mathcal{C})$. That presents us with three notions of genericity: set-genericity, $A$-definable class genericity and class-genericity. One of the questions that arises now is in which way we can define the next step in this ``hierarchy'' of genericity. To answer this question, Sy Friedman and the author of this paper are currently working on so-called hyperclass-forcings in a variant of MK, i.e. forcings in which the conditions are classes (see \cite{hyper}). We will show in which context such forcings are definable and which application they have to class-theory.



\end{document}